\newtheorem{teor}{Theorem}[section]
\newtheorem{prop}[teor]{Proposition}
\newtheorem{lemma}[teor]{Lemma}
\newtheorem{cor}[teor]{Corollary}
\theoremstyle{definition}
\newtheorem{dhef}[teor]{Definition}
\theoremstyle{remark}
\newtheorem{rk}[teor]{Remark}
\newtheorem{ehse}[teor]{Example}
\long\def\elimina#1{} 
\def\Leb{\mathcal{L}}
\def\R{\mathbb{R}}
\def\N{\mathbb{N}}
\def\pscal#1#2{\left\langle#1,\,#2\right\rangle}
\def\projm{\Pi^M}
\def\gauge{\rho}
\def\pgauge{\gauge^0}
\def\len{l}
\def\vf{v_f}
\def\uo{u_{\phi}}
\def\uf{u_{f}}
\def\gaph{\Gamma_{\phi}}
\def\gaf{\Gamma_{f}}
\def\gafc{\overline{\Gamma}_{f}}
\def\gaphc{\overline{\Gamma}_{\phi}}
\def\test{C^{\infty}_c\left(\R^n \setminus \gafc\right)}
\def\pscal#1#2{\langle#1,\,#2\rangle}
\def\oray#1{{]\!]#1[\![}}
\def\cray#1{{[\![#1]\!]}}
\def\ccurve#1{{\Gamma}_{#1}}
\def\len#1{L(#1)}
\def\spaceX{X}
\def\Xf{X_f}
\DeclareMathOperator{\inte}{int}
 \DeclareMathOperator{\dive}{div}
\DeclareMathOperator{\proj}{\Pi} 
\DeclareMathOperator{\spt}{spt}
\DeclareMathOperator{\essspt}{spt}
\DeclareMathOperator{\diam}{diam}
\begin{document}

\title[A BVP in granular matter theory]%
{Existence and uniqueness of solutions \\ for a boundary value problem
\\ arising from granular matter theory}%
\author[G.~Crasta]{Graziano Crasta}
\address{Dipartimento di Matematica ``G.\ Castelnuovo'', Univ.\ di Roma I\\
P.le A.\ Moro 2 -- 00185 Roma (Italy)}
\email[Graziano Crasta]{crasta@mat.uniroma1.it}
\author[A.~Malusa]{Annalisa Malusa}
\email[Annalisa Malusa]{malusa@mat.uniroma1.it}
\keywords{Boundary value problems,
mass transfer theory}
\subjclass[2010]{Primary 35A02, Secondary 35J25}
\date{October 26, 2012}

\begin{abstract}
We consider a system of PDEs of Monge-Kantorovich type that,
in the isotropic case, describes the stationary configurations
of two-layers models in granular matter theory with a general source
and a general boundary data.
We propose a new weak formulation which is consistent with the
physical model and permits us to prove existence and uniqueness
results.
\end{abstract}

\maketitle

\section{Introduction}

The model system usually considered for the description of the stationary
configurations of sandpiles on a container is the Monge-Kantorovich type
system of PDEs
\begin{equation}\label{f:MKintro}
\begin{cases}
-\dive(v\, Du) = f & \text{in $\Omega$},\\
|Du|\leq 1,\ v\geq 0 & \text{in $\Omega$},\\
(1-|Du|)v=0 & \text{in $\Omega$},\\
u\leq\phi & \text{on $\partial\Omega$},\\
u=\phi & \text{on $\gaf$}
\end{cases}
\end{equation}
(see, e.g., \cite{AEW,CaCa,CCS,HK}).
The data of the problem are the flat surface of the container $\Omega\subseteq \R^2$, the profile
of the rim $\phi$, and the density of the source $f\geq 0$,
whereas the set $\gaf$ is a subset of $\partial\Omega$, defined in terms
of the other data, that will be
specified below.

The dynamical behaviour of the granular matter is pictured by the pair
$(u,v)$, where $u$ is the profile of the standing layer, whose slope has not to exceed a critical value
($|Du|\leq 1$)
in order to prevent avalanches, while $v\geq 0$ is the thickness of the rolling layer.
The condition $(1-|Du|)v=0$ corresponds to require that
the matter runs down only in the region where the slope of the heaps is maximal.

The set $\gaf$ (which depends on the source $f$, the geometry of $\Omega$, and on the boundary datum
$\phi$) is the part of the border where every admissible profile $u$ touches the rim, in such a way the exceeding sand can fall down (see Definitions in Section \ref{s:problem}).
We underline that the set $\gaf$ is not an additional datum of the problem, but it is constructed in terms of the other data
(see \eqref{f:gafg} for its precise definition).

The main contribution of our results to the theory concerns the uniqueness of the $v$--component for general boundary value problems, based on a new weak formulation of
the continuity equation $-\dive (v\, Du)=f$ in $\Omega$.

The case of the open table problem, corresponding to $u=\phi=0$ on $\partial \Omega$, is already
completely understood (see e.g.\ \cite{CaCa,CCCG,CCS,Pr} and the references therein). Namely, if $d_{\Omega}$ denotes the distance function from the boundary of $\Omega$, it is possible to construct a function $\vf\geq 0$, $\vf\in L^1(\Omega)$ such that the pair $(d_{\Omega},\vf)$
is a solution to \eqref{f:MKintro} (we underline that the continuity equation is  understood in the sense of distributions). Moreover it turns out that $\vf$ is the unique  admissible $v$--component, and every profile $u$ must coincides with 
$d_{\Omega}$ where the transport is active.

These results validate the model for the open table problem, since they depicted the sole 
physically acceptable situation: the mass transport density $v$ has to be uniquely determined  by the data of the problem, while the profile $u$ could be different from the maximal one only where the mass
transportation does not act.

Moreover the profile is unique (and maximal) if and only if the source $f$  pours sand along the ridge of the maximal profile (i.e.\ on the closure of the set where $d_{\Omega}$ is
not differentiable).

As far as we know, only the following two particular cases of non-homogeneous boundary conditions were considered
in literature.

In \cite{CFV}, mostly devoted to a numerical point of view, the problem of the open table with walls (corresponding to $u=\phi=0$ on a regular portion $\Gamma$ of $\partial \Omega$, and 
$\phi=+\infty$ in $\partial \Omega \setminus \Gamma$) is considered. In order to take into account the fact that the sand can flow out from the table only through $\Gamma$, the 
weak formulation of the continuity equation proposed in \cite{CFV} is the following:
\[
\int_\Omega v \pscal{Du}{D\psi}\, dx = \int_\Omega f \psi \, dx\,,
\qquad \forall
\psi\in C^{\infty}_c(\R^2 \setminus \overline{\Gamma}).
\]
Under suitable regularity assumptions on the geometry of the sandpile, it is proved that there exists a function
$\vf\geq 0$, $\vf\in L^1(\Omega)$ such that the pair $(d_{\Gamma},\vf)$ is a solution to \eqref{f:MKintro},
where $d_{\Gamma}$ is the distance function from $\Gamma$.

A different approach to non-homogeneous  boundary conditions was recently proposed in \cite{CMl}. 
In that paper we considered only admissible boundary data, that is continuous functions $\phi$ on $\partial \Omega$ that coincide on the boundary with the related 
Lax--Hopf function $\uo$. In the model,  this corresponds to treating the so called tray table problem, where the boundary datum $\phi$ gives the height of the rim.
The requirements are that the border of the rim is always reached ($u=\phi$ on $\partial\Omega$), and that the continuity equation is satisfied in the sense of distributions.
The existence of a solution is obtained, in analogy with the open table problem, by exhibiting an
explicit function $\vf\geq 0$, $\vf\in L^1(\Omega)$ such that the pair $(\uo,\vf)$ is a solution to \eqref{f:MKintro}. Moreover a necessary and sufficient condition for the uniqueness of the
$u$-component can be obtained, with minor changes, as in the homogeneous case.

The main novelty in the analysis of the non-homogeneous case concerns the lack of uniqueness
of the $v$--component. Namely, the boundary datum $\phi$  modifies the geometry of the
directions along which the sand falls down. 
In particular it may happen that 
a family of transport rays passing across $\Omega$
covers a set of positive measure,
so that it is possible to transport any additional mass
along these rays, keeping the total flux unchanged.

In the present paper we shall deal with general boundary data, thus allowing the presence of walls
as well as of exit points at different heights. The main goal will be to modify the 
weak formulation
of the continuity equation in order to gain the uniqueness of the $v$--component, without loosing information concerning the $u$--component, then validating the model in a very
general case.

Moreover we shall not require that the profiles have to reach the height of the rim at every point of 
$\partial \Omega$ where they a--priori could agree, compatibly with their gradient constraint 
(i.e.\ at every point where the maximal profile $\uo$ agrees with $\phi$). It is perfectly clear that,
during the evolution ending with the stationary state, the sandpile grows under the action of the source, so that it is not reasonable to require that $u=\phi$ in the part of the boundary not 
reached by those transport rays along which no sand is poured.
For this reason we relax the boundary condition, requiring $u\leq \phi$ on $\partial\Omega$,
and by selecting the region $\gaf\subseteq \partial \Omega$ where $u=\phi$ in terms of $f$
and of the geometry of the transport rays.

The region $\gaf$ also dictates the test functions in the weak formulation of the continuity
equation. Namely, we require that a solution $(u,v)$ to \eqref{f:MKintro} has to satisfy
\[
\int_\Omega v \pscal{Du}{D\psi}\, dx = \int_\Omega f \psi \, dx\,,
\qquad \forall
\psi\in\test, 
\]
thus taking into account the fact that the sand cannot exit from 
$\partial\Omega \setminus \gaf$.

\medskip
We present all the results in a more general setting, which takes into account the possibility
of homogeneous anisotropies
(see also \cite{CMf,CMg,CMi,CMh}).
More precisely, we shall consider the following system of PDEs
in the open,
bounded and connected set $\Omega\subseteq \R^n$
with Lipschitz boundary:
\begin{equation}\label{f:MKint}
\begin{cases}
-\dive(v\, D\gauge(Du)) = f
&\text{in $\Omega$},\\
\gauge(Du)\leq 1,\ v\geq 0
&\text{in $\Omega$},\\
(1-\gauge(Du))v=0 &\text{in $\Omega$},\\
u\leq\phi & \text{on $\partial\Omega$},\\
u=\phi & \text{on $\gaf$},\\
\end{cases}
\end{equation}
in the unknowns $v\in L^1(\Omega)$,
$u\in W^{1,\infty}(\Omega)$.
(Here and in the following we understand that
$u\in W^{1,\infty}(\Omega)$ denotes the Lipschitz extension
to $\overline{\Omega}$ of $u$.)
In this formulation:
\begin{itemize}
\item[-] $\gauge \colon \R^n \to [0,+\infty)$ is the gauge function of
a compact convex set $K\subseteq \R^n$, of class $C^1$ and
containing the origin in its interior;
\item[-] $f\in L^1(\Omega)$, $f\geq 0$; 
\item[-] $\phi\colon \partial\Omega \to \R^+$ is a lower semicontinuous function, $\phi\not\equiv +\infty$.
\end{itemize}

The plan of the paper is the following.
After recalling some notation an basic results,
in Section~\ref{s:problem} we give the precise formulation of
the problem, 
showing that under the assumptions listed above
we do not have, in general, neither existence nor uniqueness of
solutions (see Examples~\ref{r:es1} and~\ref{r:es2}).
In order to overcome these obstructions we then introduce an
additional geometric assumption (see (H5) below), 
which guarantees that the mass is transported straight to the
boundary.
This condition is automatically satisfied if $\phi = 0$,
while in the general case may fail,
possibly causing both concentration of mass transportation
on sets of lower dimension (described by measure-type transport densities)
or branching of transport paths (and thus multiplicity of
transport densities).
In Section~\ref{s:exi} we prove that \eqref{f:MKint} always
admit solutions, showing that there exists a non-negative function
$v_f\in L^1(\Omega)$ such that the pair $(\uo, v_f)$ is a solution
(being $\uo$ the Lax-Hopf function defined in \eqref{f:LH} below). 
The main ingredient for this step is a disintegration formula
for the Lebesgue measure proved by S.~Bianchini in \cite{Bi}.
In addition, we prove a preliminary but fundamental uniqueness result:
if $(\uo, v)$ is a solution to \eqref{f:MKint},
then $v = v_f$.
In order to get this result,
we show that we need to strengthen 
the geometric assumption (H5) 
(see Example~\ref{r:es3} and assumption (H6) below).

Section~\ref{s:uni} is devoted to the characterization of all
solutions and, consequently, to the uniqueness result.
More precisely, we show that there exists a minimal profile
$u_f$ such that every solution to \eqref{f:MKint} is of the form
$(u, v_f)$ with $u_f\leq u\leq \uo$. 
In particular, the $v$-component is unique, whereas the $u$-component
is unique (and coincides with $\uo$) if and only if the support
of the source $f$ covers the set of the endpoints of the transport
rays.
Finally, in Section~\ref{s:sand} we briefly rewrite our results in  the isotropic case \eqref{f:MKintro}, and we discuss their interpretation
in terms of granular matter models.


\section{Notation and preliminaries}
\label{s:prel}

\textsl{General notation.}
The standard scalar product of $x,y\in\R^n$
will be denoted by $\pscal{x}{y}$, while $|x|$ will denote the
Euclidean norm of $x$.
Concerning the segment joining $x$ with $y$, we set
\[
\cray{x,y} := \{tx+(1-t)y;\ t\in [0,1]\},
\qquad \oray{x,y} := \cray{x,y}\setminus\{x,y\}.
\]

Given a set $A\subset \R^n$, its interior, its closure and its boundary
will be denoted by $\inte A$, $\overline{A}$ and $\partial A$ respectively.

We shall denote by $\Leb^n$ and $\mathcal{H}^k$ respectively
the $n$-dimensional Lebesgue measure and the
$k$-dimensional Hausdorff measure.
Given a measure $\mu$ and a $\mu$-measurable set $F$,
the symbol $\mu\lfloor F$ will denote the restriction of $\mu$
to the set $F$.

If $g\colon\Omega\to\R$ is a measurable function,
we shall denote by $\essspt g$ the essential support of $g$,
that is the complement in $\Omega$ of the union of all relatively
open subsets $A\subset\Omega$ such that $g=0$ a.e.\ in $A$.
Notice that $\essspt g$ is a relatively closed set in $\Omega$,
but need not to be closed as a subset of $\R^n$.

\smallskip
\textsl{Convex geometry.}
Let us now fix the notation and the basic results concerning
the convex set which plays the r\"ole of gradient constraint
for the $u$-component in (\ref{f:MKint}).
In the following we shall assume that
\begin{equation}\label{f:hypk}
\text{$K$ is a compact, convex subset of $\R^n$ of class $C^1$, with $0\in\inte K$.}
\end{equation}
Let us denote by $K^0$ the
polar set of $K$, that is
\[
K^0 := \{p\in\R^n;\ \pscal{p}{x}\leq 1\ \forall x\in K\}\,.
\]
We recall that, if $K$ satisfies (\ref{f:hypk}), then
$K^0$ is a compact, strictly convex subset of $\R^n$
containing the origin in its interior, and $K^{00} = (K^0)^0 = K$
(see, e.g., \cite{Sch}).

The gauge function $\gauge\colon \R^n \to \R$ of $K$ is defined by
\[
\gauge(\xi) := \inf\{ t\geq 0;\ \xi\in t K\}=\max \{\pscal{\xi}{\eta},\ \eta\in K^0\}\,,
\quad \xi\in\R^n\,.
\]
It is straightforward to see that $\gauge$ is a positively 1-homogeneous convex function such that
$K=\{\xi\in\R^n\colon\ \gauge(\xi)\leq 1\}$. The gauge function of the set $K^0$ will be denoted by $\pgauge$.

The properties of the gauge functions needed in the paper are collected in the following
theorem.

\begin{teor}\label{t:sch}
Assume that $K\subseteq \R^n$ satisfies (\ref{f:hypk}).
Then the following hold:

\par\noindent (i) $\gauge$ is continuously
differentiable in $\R^n\setminus \{0\}$, and
\[
\gauge(\xi+\eta)\leq \gauge(\xi) + \gauge(\eta)\, \quad \forall\ \xi,\eta\in\R^n\,.
\]

\par\noindent (ii) $K^0$ is strictly convex,  and
\[
\begin{split}
\pgauge(\xi+\eta) & \leq \pgauge(\xi) + \pgauge(\eta),  \quad \forall\ \xi,\eta\in\R^n \\
\pgauge(\xi+\eta) &= \pgauge(\xi) + \pgauge(\eta)\ \Leftrightarrow\
\exists \lambda\geq 0\ \colon\ \xi = \lambda\, \eta\ \text{or}\ \eta = \lambda\, \xi.
\end{split}
\]

\par\noindent (iii)
For every $\xi\neq 0$,
$D\gauge(\xi)$ belongs to $\partial K^0$, and
\[
\pscal{D\gauge(\xi)}{\xi} = \gauge(\xi),\quad
\pscal{p}{\xi} < \gauge(\xi)\ \forall p\in K^0,\ p\neq D\gauge(\xi)\,.
\]
\end{teor}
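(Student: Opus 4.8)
The plan is to prove the three items of Theorem~\ref{t:sch} by exploiting the duality between $K$ and $K^0$ encoded in the two representations $\gauge(\xi)=\inf\{t\geq0:\xi\in tK\}=\max\{\pscal{\xi}{\eta}:\eta\in K^0\}$, together with the standing hypothesis \eqref{f:hypk} and the already recalled facts that $K^0$ is compact, contains $0$ in its interior, and $K^{00}=K$.

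\medskip
For part~(i), I would first observe that subadditivity of $\gauge$ is immediate from the support-function representation $\gauge(\xi)=\max_{\eta\in K^0}\pscal{\xi}{\eta}$, since the maximum of a sum is bounded by the sum of the maxima. For differentiability away from the origin, the key point is that $\gauge$ is the support function of the \emph{strictly convex} body $K^0$: a classical fact (see \cite{Sch}) is that the support function of a convex body is differentiable at $\xi\neq0$ precisely when the supporting hyperplane with outer normal $\xi$ touches the body at a single point, which is exactly what strict convexity of $K^0$ guarantees; moreover $D\gauge(\xi)$ is then that unique maximizer, and continuity of $\xi\mapsto D\gauge(\xi)$ on $\R^n\setminus\{0\}$ follows from compactness of $K^0$ and uniqueness of the maximizer (a standard upper-semicontinuity-of-the-argmax argument). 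Of course strict convexity of $K^0$ itself must be established, which I fold into part~(ii).

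\medskip
For part~(ii), the inequality $\pgauge(\xi+\eta)\leq\pgauge(\xi)+\pgauge(\eta)$ is just subadditivity of the gauge of the convex set $K^0$, proved exactly as above (or directly from convexity and $1$-homogeneity). The substantive claim is strict convexity of $K^0$, equivalently the strict triangle inequality for $\pgauge$ unless $\xi,\eta$ are nonnegatively proportional. This is where I expect the main obstacle: one must convert the $C^1$ regularity of $\partial K$ into strict convexity of $\partial K^0$. The argument is the dual of the one in part~(i): $\pgauge$ is the support function of $K^{00}=K$, and a convex body has $C^1$ boundary if and only if its support function is differentiable off the origin, which in turn forces the dual body $K^0$ to be strictly convex (no segments on $\partial K^0$, since a segment on $\partial K^0$ would produce a non-unique supporting point of $K$ and hence a point of non-differentiability of $\pgauge$, contradicting $\partial K\in C^1$). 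Making this equivalence precise — and being careful that "strictly convex" here means $\relint\cray{x,y}\subset\inte K^0$ whenever $x,y\in K^0$, $x\neq y$ — is the delicate part; I would cite \cite{Sch} for the support-function/smoothness dictionary and then spell out the contrapositive. The characterization of equality then reads off: $\pgauge(\xi+\eta)=\pgauge(\xi)+\pgauge(\eta)$ forces the maximizers of $\pscal{\cdot}{\xi}$, $\pscal{\cdot}{\eta}$ and $\pscal{\cdot}{\xi+\eta}$ over $K$ to coincide at a common point $x\in\partial K$; since $\partial K$ is $C^1$, $x$ has a unique outer normal direction, so $\xi$ and $\eta$ must be nonnegative multiples of that direction, hence of each other, and conversely $1$-homogeneity gives equality when $\xi=\lambda\eta$.

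\medskip
For part~(iii), fix $\xi\neq0$. By part~(i), $D\gauge(\xi)$ is the unique $\eta\in K^0$ attaining the maximum in $\gauge(\xi)=\max_{\eta\in K^0}\pscal{\xi}{\eta}$; in particular $\pscal{D\gauge(\xi)}{\xi}=\gauge(\xi)$ and $\pscal{p}{\xi}<\gauge(\xi)$ for every other $p\in K^0$. It remains to check $D\gauge(\xi)\in\partial K^0$: if $D\gauge(\xi)$ were in $\inte K^0$, one could move a little in the direction $\xi$ and stay in $K^0$ while strictly increasing $\pscal{\cdot}{\xi}$, contradicting maximality — here I use $\xi\neq0$. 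Alternatively, $1$-homogeneity of $\gauge$ gives $\pgauge(D\gauge(\xi))\geq\pscal{D\gauge(\xi)}{\xi}/\gauge(\xi)=1$ together with $D\gauge(\xi)\in K^0$, i.e. $\pgauge(D\gauge(\xi))=1$, which is $D\gauge(\xi)\in\partial K^0$. The whole proof is essentially an application of the classical correspondence between smoothness and strict convexity of polar bodies, so beyond part~(ii) the steps are routine; I would keep the exposition brief and lean on \cite{Sch} for the standard convex-geometry facts.
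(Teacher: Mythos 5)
The paper's own proof of this theorem is a bare citation to \cite{Sch}, Section~1.7, and your overall strategy---reading everything off the identities $\gauge=h_{K^0}$, $\pgauge=h_K$ and the classical smoothness/strict-convexity duality for polar bodies---is precisely the content of that reference; parts (i), (iii) and the equality case in (ii) are handled correctly. There is, however, one step that fails as written: the pivot of your argument for strict convexity of $K^0$ is the claim that ``a convex body has $C^1$ boundary if and only if its support function is differentiable off the origin.'' This is false. Differentiability of $h_K$ away from the origin is equivalent to \emph{strict convexity} of $K$, not to smoothness of $\partial K$ (a stadium, the convex hull of two disjoint discs, has $C^1$ boundary, yet $h_K$ fails to be differentiable in the two directions normal to its flat sides). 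You have swapped the two halves of the duality: the correct dictionary is ($K$ smooth) $\Leftrightarrow$ ($K^0$ strictly convex) $\Leftrightarrow$ ($\gauge=h_{K^0}$ differentiable off $0$), while ($K$ strictly convex) $\Leftrightarrow$ ($\pgauge=h_K$ differentiable off $0$). Consequently the chain in your parenthetical---a segment on $\partial K^0$ produces non-differentiability of $\pgauge$, contradicting $\partial K\in C^1$---does not close: non-differentiability of $\pgauge$ only says that $K$ has a flat face, which is perfectly compatible with a $C^1$ boundary.

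The repair is local and uses the geometric idea you already have. Suppose $p_1\neq p_2$ and $\cray{p_1,p_2}\subset\partial K^0$, and set $p_0:=(p_1+p_2)/2$, so $\pgauge(p_0)=1$. Choose $x_0\in K$ attaining $\pscal{p_0}{x_0}=h_K(p_0)=1$. Since $\pscal{p_i}{x_0}\leq 1$ for $i=1,2$ and their average equals $1$, both inequalities are equalities; hence each hyperplane $\{y:\ \pscal{p_i}{y}=1\}$ supports $K$ at $x_0\in\partial K$, and these two hyperplanes are distinct because $p_1\neq p_2$. This contradicts the uniqueness of the supporting hyperplane at each boundary point guaranteed by the $C^1$ regularity of $\partial K$. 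With this replacement (and the corresponding correction that it is $\gauge$, not $\pgauge$, whose differentiability is tied to strict convexity of $K^0$), the remainder of your argument goes through and matches the facts the paper imports from \cite{Sch}.
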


\begin{proof}
See \cite{Sch}, Section 1.7.
\end{proof}

In what follows we shall consider $\R^n$ endowed with the possibly asymmetric norm $\pgauge(x-y)$,
$x,y\in\R^n$.
By Theorem~\ref{t:sch}(ii), the unit ball $K^0$ of $\pgauge$ is strictly convex but,
under the sole assumption (\ref{f:hypk}), it need not be differentiable.
Moreover, the Minkowski structure $(\R^n, \pgauge)$
is not a metric space in the usual sense, since $\pgauge$
need not be symmetric (for an introduction to non-symmetric metrics see \cite{Gromov}).
Finally, since
$K^0$ is compact and $0\in \inte K^0$, then the convex metric is equivalent to the Euclidean one,
that is there exist $c_1$, $c_2>0$ such that $c_1|\xi|\leq \pgauge(\xi)\leq c_2 |\xi|$ for every $\xi\in\R^n$.

\smallskip
\textsl{Curves.}
In the following $\Omega$ will denote an open, bounded, connected subset 
of $\R^n$ with Lipschitz boundary.
Let us denote by $\ccurve{y,x}$ the family of absolutely continuous paths in $\overline{\Omega}$
connecting $y$ to $x$:
\[
\ccurve{y,x}:=\{\gamma\in AC([0,1],\overline{\Omega}),\ \gamma(0)=y,\ \gamma(1)=x\}\,.
\]
For every absolutely continuous curve $\gamma\colon [0,1]\to \R^n$, let us denote by
$\len{\gamma}$ its length with respect to the convex metric associated to $\pgauge$, that is
\[
\len{\gamma} := \int_0^1 \pgauge(\gamma'(t))\, dt\,.
\]

Since $\overline{\Omega}$ is a compact subset of $\R^n$,
by a standard compactness argument we have that
for every $x,y\in\overline{\Omega}$
there exists a
(distance) minimizing curve $\tilde{\gamma}\in \ccurve{y,x}$
such that
$\len{\tilde{\gamma}}\leq \len{\gamma}$ for every $\gamma\in \ccurve{y,x}$
(see e.g.\ \cite[Thm.~4.3.2]{AmTi}, \cite[\S14.1]{Ces}).

The main motivation for introducing the convex metric associated to $\pgauge$ is the fact that the
Sobolev functions with the gradient constrained to belong to $K$ are
the locally 1-Lipschitz functions with respect to $\pgauge$, as stated in the following result (see \cite[Chap.~5]{Li}).

\begin{lemma}\label{l:diseqrho}
Assume that the set $K\subset\R^n$ satisfies \eqref{f:hypk}.
Let $\pgauge$ be the gauge function of $K^0$, let $\Omega\subset\R^n$ be a Lipschitz domain,
and let $u\colon \Omega \to \R$.
Then the following properties are equivalent.
\begin{itemize}
\item[(i)] $u$  is a locally $1$-Lipschitz function with respect to $\pgauge$, i.e.
\begin{equation}\label{f:lipur}
u(x_2)-u(x_1)\leq\pgauge(x_2-x_1)\quad \text{for every}\
\cray{x_1,x_2}\subset\Omega.
\end{equation}
\item[(ii)]
$u\in W^{1,\infty}(\Omega)$, and $Du(x) \in K$ for a.e.\ $x\in\Omega$.
\item[(iii)]
$u(x) - u(y) \leq \len{\gamma}$
for every $x,y\in\Omega$ and every $\gamma\in\ccurve{y,x}$.
\end{itemize}
\end{lemma}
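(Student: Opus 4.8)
The plan is to establish the chain of implications $(iii)\Rightarrow(i)\Rightarrow(ii)\Rightarrow(iii)$, of which only the last step requires genuine care.

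For $(iii)\Rightarrow(i)$, given a segment $\cray{x_1,x_2}\subset\Omega$ I would simply test (iii) on its affine parametrization $\gamma(t)=(1-t)x_1+t\,x_2$, which belongs to $\ccurve{x_1,x_2}$; since $\pgauge$ is positively $1$-homogeneous one has $\len{\gamma}=\int_0^1\pgauge(x_2-x_1)\,dt=\pgauge(x_2-x_1)$, and \eqref{f:lipur} follows at once.

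For $(i)\Rightarrow(ii)$, I would first use the equivalence $c_1|\xi|\le\pgauge(\xi)\le c_2|\xi|$ to deduce from \eqref{f:lipur} that $u$ is locally Lipschitz, hence (Rademacher) differentiable at a.e.\ $x\in\Omega$. At such a point, inserting $x_1=x$, $x_2=x+t\xi$ in \eqref{f:lipur} for small $t>0$ and letting $t\to0^+$ gives $\pscal{Du(x)}{\xi}\le\pgauge(\xi)$ for every $\xi\in\R^n$; since $\pgauge$, being the gauge function of $K^0$, coincides with the support function of $K^{00}=K$, this is exactly the condition $Du(x)\in K$. As $K$ is bounded this also bounds $|Du|$ a.e., and since $\Omega$ is a bounded, connected Lipschitz domain---hence quasiconvex---the local Lipschitz bound upgrades to a global one, i.e.\ $u\in W^{1,\infty}(\Omega)$.

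For $(ii)\Rightarrow(iii)$, fix $x,y\in\Omega$ and $\gamma\in\ccurve{y,x}$. If $\gamma([0,1])$ is a compact subset of $\Omega$, I would mollify, setting $u_\epsilon=u*\eta_\epsilon$ on a neighbourhood of $\gamma([0,1])$: then $Du_\epsilon=Du*\eta_\epsilon\in K$ by convexity of $K$, the map $t\mapsto u_\epsilon(\gamma(t))$ is absolutely continuous, and using $\pscal{p}{\xi}\le\pgauge(\xi)$ for all $p\in K$ (again because $\pgauge$ is the support function of $K$) one gets
\[
u_\epsilon(x)-u_\epsilon(y)=\int_0^1\pscal{Du_\epsilon(\gamma(t))}{\gamma'(t)}\,dt
\le\int_0^1\pgauge(\gamma'(t))\,dt=\len{\gamma};
\]
letting $\epsilon\to0^+$ (so that $u_\epsilon\to u$ uniformly on $\gamma([0,1])$) gives $(iii)$ for such curves. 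For a general curve meeting $\partial\Omega$, I would approximate it by curves $\gamma_\delta$ lying in $\Omega$, with endpoints converging to $x$ and $y$, with $\gamma_\delta\to\gamma$ uniformly and $\len{\gamma_\delta}\to\len{\gamma}$, and then pass to the limit using the continuity of $u$ on $\overline\Omega$. The hard part is precisely this last approximation: it is where the Lipschitz regularity of $\partial\Omega$ enters, via an inward displacement of the curve along a vector field transversal to the boundary, and some care is needed to ensure that the $\pgauge$-lengths of the perturbed curves actually converge to that of $\gamma$.
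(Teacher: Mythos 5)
Your argument is correct, but note that the paper does not prove this lemma at all: it simply cites Lions (Chap.~5), so there is no in-text proof to compare against. Your chain $(iii)\Rightarrow(i)\Rightarrow(ii)\Rightarrow(iii)$ is the standard self-contained route, and each step is sound: the identification of $\pgauge=\gau{K^0}$ with the support function of $K^{00}=K$ is exactly what makes $(i)\Rightarrow(ii)$ and the mollification estimate in $(ii)\Rightarrow(iii)$ work, and you correctly invoke quasiconvexity of Lipschitz domains to pass from the local Lipschitz bound to $u\in W^{1,\infty}(\Omega)$. Two small observations. First, in $(ii)\Rightarrow(iii)$ the case distinction is slightly simpler than you suggest: any $\gamma$ with values in the open set $\Omega$ automatically has compact image at positive distance from $\partial\Omega$, so the only case needing approximation is a curve that actually touches $\partial\Omega$. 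Second, the approximation you flag as the hard part does go through and is where (H1) is genuinely used: a bounded Lipschitz domain admits a Lipschitz vector field $V$, supported near $\partial\Omega$ and uniformly transversal to it (built from the boundary charts by a partition of unity), and the maps $\Phi_\delta(x)=x+\delta V(x)$ send $\gamma$ into $\Omega$, fix the interior endpoints in the limit, and satisfy $\len{\Phi_\delta\circ\gamma}\leq(1+C\delta)\len{\gamma}$ by the equivalence $c_1|\xi|\leq\pgauge(\xi)\leq c_2|\xi|$; one then concludes using the continuous extension of $u$ to $\overline{\Omega}$. With that detail written out, your proof is complete.
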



\section{Formulation of the problem}\label{s:problem}

In this section we shall give the definition of
solution $(u,v)$ to the PDEs system
\begin{equation}
\label{f:MK}
\begin{cases}
-\dive(v\, D\gauge(Du)) = f
&\text{in $\Omega$},\\
\gauge(Du)\leq 1,\ v\geq 0
&\text{in $\Omega$},\\
(1-\gauge(Du))v=0 &\text{in $\Omega$},\\
u\leq\phi & \text{on $\partial\Omega$},\\
u=\phi & \text{on $\gaf$.}
\end{cases}
\end{equation}
The basic assumptions are:
\begin{itemize}
\item[(H1)] $\Omega$ is an open, bounded, connected subset of
$\R^n$ with Lipschitz boundary;
\item[(H2)] $\gauge$ is the gauge function of a convex set $K\subseteq \R^n$ satisfying (\ref{f:hypk});
\item[(H3)] $f$ belongs to $L^1_+(\Omega)$, the set of non-negative integrable functions in $\Omega$;
\item[(H4)] $\phi\colon \partial\Omega \to (-\infty,+\infty]$ is a 
lower semicontinuous (l.s.c.)
function, $\phi\not\equiv +\infty$.
\end{itemize}
As we shall see in Examples~\ref{r:es1}, \ref{r:es2} and
\ref{r:es3}, this set of assumptions is not enough in order
to have existence and uniqueness of solutions.
Two additional assumptions on the geometry of the problem
will be introduced in the remaining part of this section.

The set $\gaf\subseteq\partial\Omega$ where the $u$ component is forced to agree with the boundary datum $\phi$,
is dictated by the data of the problem. 
Using the terminology
of the Optimal Transport Theory, $\gaf$ corresponds to the set of initial points of those transport rays on which the transport is active. 
For a rigorous definition, some additional notation is in order.

Let $\uo\colon\overline{\Omega}\to\R$ be
the Lax-Hopf function defined by
\begin{equation}\label{f:LH}
\uo(x) := \inf\left\{\phi(y)+L(\gamma):\ y\in\partial\Omega,\ \gamma\in\ccurve{y,x}
\right\},\quad
x\in \overline{\Omega}.
\end{equation}
It is clear that for every $x\in \Omega$ the infimum in (\ref{f:LH})
is attained, that is there exist $y\in\partial\Omega$ and a minimizing curve
$\gamma\in\ccurve{y,x}$ such that $\uo(x)=\phi(y)+L(\gamma)$.


\begin{dhef}
\label{d:maxg}
For $x\in\overline{\Omega}$ we call
\textsl{geodesic through} $x$ any
curve $\gamma\in\ccurve{y,x}$,
$y\in\partial\Omega$,
satisfying $\uo(x) = \phi(y) + L(\gamma)$.
Moreover, we say that a geodesic through $x$ is (forward) \textsl{maximal}
if its image is not a proper subset of the image
of another geodesic through $x$.
\end{dhef}

We recall that $\uo$ is a Lipschitz function in $\overline{\Omega}$,
$\gauge(D\uo) = 1$ a.e.\ in $\Omega$,
and it is the maximal function in the space $\spaceX$ defined by
\[
\spaceX := \left\{
u\in W^{1,\infty}(\Omega):\
Du\in K \text{ a.e.~in } \Omega,\
u\leq \phi\ \text{on}\ \partial\Omega
\right\}\,.
\]
(Since $\Omega$ has a Lipschitz boundary, we understand that
all functions in $W^{1,\infty}(\Omega)$ are extended to
Lipschitz continuous functions in $\overline{\Omega}$.)
We shall show that the Lax-Hopf function $\uo$ is always an
admissible $u$-component in \eqref{f:MK}, regardless of the source $f$ (see Theorem \ref{t:eu1}).

The function $\uo$ dictates the geometry of the transportation,
that is the mass produced by the source $f$ runs along
the geodesics associated to $\uo$, and
falls down at their initial points.

\smallskip

For every $x\in\overline{\Omega}$ we denote by $\Pi(x)$ the full set of projections
of $x$ on $\partial\Omega$, i.e.
\begin{equation}
\label{f:proj}
\Pi(x) := \left\{y\in\partial\Omega:\
\exists\ \text{a geodesic}\  \gamma\in\ccurve{y,x}
\ \text{ through $x$}
\right\}\,,
\end{equation}
whereas $\projm(x)$ will denote the set of maximal projections, i.e.
\[
\projm(x) := \left\{y\in\partial\Omega:\
\exists\ \text{a maximal geodesic}\ \gamma\in\ccurve{y,x}
\ \text{ through $x$}\right\}.
\]

Let us define the set of the initial points of maximal geodesics
\begin{equation}\label{f:gaphg}
\gaph :=\{y\in \partial\Omega\colon\ \exists \
\text{a maximal geodesic}\ \gamma\in\ccurve{y,x},\
x\in \Omega
\}
= \bigcup_{x\in \Omega} \projm(x)
\end{equation}
and the subset of $\gaph$
of the initial points of those maximal geodesics where
the source $f$ is active 
\begin{equation}\label{f:gafg}
\gaf := 
\{y\in \partial\Omega\colon\ \exists \
\text{maximal\ geodesic}\ \gamma\in\ccurve{y,x},\
x\in\essspt f
\}= \bigcup_{x\in\essspt f} \projm(x)\,.
\end{equation}
The set $\gaf$ turns out to be, in the sandpile problem,
the actual portion of $\gaph$ (depending on the
source of matter $f$) where the sand falls down.
The set $\partial\Omega\setminus \gaph$ is the part of the boundary
closed by the walls.

We are now in a position to fix the rigorous meaning of problem (\ref{f:MK}).
The functional setting for the unknowns $(u,v)$ in (\ref{f:MK}) is
$\Xf \times L^1_+(\Omega)$, where
\[
\Xf := \left\{
u\in\spaceX:\
u = \phi\ \text{on}\ \gaf
\right\}\,, \quad
L^1_+(\Omega) := \{v\in L^1(\Omega):\ v\geq 0 \text{ a.e.\ in } \Omega\}\,.
\]

\begin{dhef}\label{d:sol}
A pair $(u,v)$ is a solution to \eqref{f:MK} if
\begin{itemize}
\item[(i)] $(u,v)\in \Xf \times L^1_+(\Omega)$;
\item[(ii)] $(1-\gauge(Du))v=0$ a.e.\ in $\Omega$;
\item[(iii)] for every $\psi\in \test$
\[
\int_\Omega v \pscal{D\gauge(Du)}{D\psi}\, dx = \int_\Omega f \psi \, dx\,.
\]
\end{itemize}
\end{dhef}

\begin{rk}
This notion of solution generalizes the one given in \cite{CFV} for the table problem with walls.
The weak formulation in (iii) corresponds to the continuity
equation for the optimal transport problem subjected to
the condition that the mass can flow away from $\Omega$
only through $\gaf$.
\end{rk}


The following two examples illustrate some points that should
be taken into account in order to deal with existence and
uniqueness results for \eqref{f:MK}. In order not to interrupt the main flow
of the exposition, the details are postponed to Section~\ref{s:examples}.

\begin{figure}
\centering
\def\svgwidth{10cm}
\begingroup%
  \makeatletter%
  \providecommand\color[2][]{%
    \errmessage{(Inkscape) Color is used for the text in Inkscape, but the package 'color.sty' is not loaded}%
    \renewcommand\color[2][]{}%
  }%
  \providecommand\transparent[1]{%
    \errmessage{(Inkscape) Transparency is used (non-zero) for the text in Inkscape, but the package 'transparent.sty' is not loaded}%
    \renewcommand\transparent[1]{}%
  }%
  \providecommand\rotatebox[2]{#2}%
  \ifx\svgwidth\undefined%
    \setlength{\unitlength}{302.95687522bp}%
    \ifx\svgscale\undefined%
      \relax%
    \else%
      \setlength{\unitlength}{\unitlength * \real{\svgscale}}%
    \fi%
  \else%
    \setlength{\unitlength}{\svgwidth}%
  \fi%
  \global\let\svgwidth\undefined%
  \global\let\svgscale\undefined%
  \makeatother%
  \begin{picture}(1,0.45203226)%
    \put(0,0){\includegraphics[width=\unitlength]{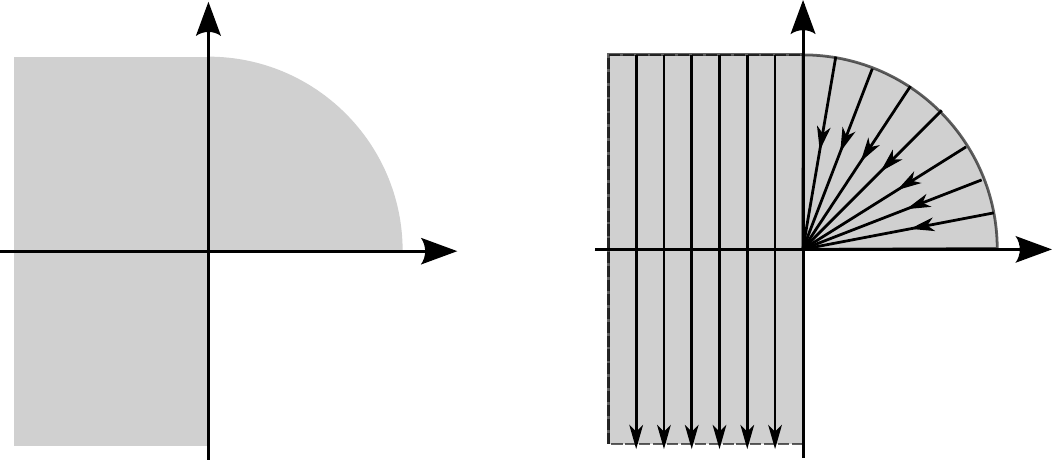}}%
    \put(0.25237365,0.29038069){\color[rgb]{0,0,0}\makebox(0,0)[lb]{\smash{$\Omega_1$}}}%
    \put(0.05770995,0.22742333){\color[rgb]{0,0,0}\makebox(0,0)[lb]{\smash{$\Omega_2$}}}%
    \put(0.08658494,0.00465722){\color[rgb]{0,0,0}\makebox(0,0)[lb]{\smash{$S$}}}%
  \end{picture}%
\endgroup%
\caption{The set $\Omega$ in Example~\ref{r:es1}}
\label{fig:nonuniq}
\end{figure}

\begin{ehse}
\label{r:es1}
Under our general assumptions (H1)--(H4), problem \eqref{f:MK}
need not admit a solution in the sense of Definition~\ref{d:sol}.
Namely, let $\Omega := \Omega_1\cup\Omega_2\subset\R^2$, where
\[
\begin{split}
\Omega_1 & := \left\{
(r\cos\theta,\, r\sin\theta):\
0 < \theta \leq \frac{\pi}{2},\
0 < r < 1\right\}\,,\\
\Omega_2 & := \left\{x=(x_1,x_2)\in\R^2:\
-1 < x_1 < 0,\
|x_2| < 1\right\}
\end{split}
\]
(see Figure~\ref{fig:nonuniq}).
Let $\phi\colon\partial\Omega\to\R$ be the function
\[
\phi(x)=
\begin{cases}
0 , & x\in S:=[-1,0]\times\{-1\}, \\
+\infty, & \text{otherwise}.
\end{cases}
\]
Let us consider the isotropic case ($\rho(\xi) = |\xi|$)
with a constant source $f\equiv 1$.
The Lax-Hopf function is
\[
\uo(x) =
\begin{cases}
1 + |x|, &\text{if}\ x\in\Omega_1,\\
1 + x_2, &\text{if}\ x\in\Omega_2,
\end{cases}
\]
so that $\gaph=\gaf = (-1,0]\times\{-1\}$, and the geometry of the geodesics is the one depicted in Figure~\ref{fig:nonuniq} right.
In particular  the mass collected in the region
$\Omega_1$ is transported to the origin, and from the origin
there is a unique transport ray $R$
going to $\gaph$.
The concentration of the mass on the single ray $R$
corresponds to the fact that the transport density $v$
should be a measure with a singular part concentrated
on $R$, 
which is not allowed in Definition~\ref{d:sol}.
\end{ehse}

\begin{ehse}
\label{r:es2}
Even in the case of existence of solutions to \eqref{f:MK},
the uniqueness of the transport density $v$
may fail if geodesics can bifurcate in the interior.
Namely, let $\Omega := \Omega_1\cup\Omega_2\cup\Omega_3\subset\R^2$, where
\[
\begin{split}
\Omega_1 & := \left\{x=(x_1,x_2)\in\R^2:\
-1 < x_1 < 0,\
|x_2| < 1\right\},\\
\Omega_2 & := \left\{
(r\cos\theta,\, r\sin\theta):\
\frac{\pi}{4} < \theta \leq \frac{\pi}{2},\
0 < r < 1\right\}\,,\\
\Omega_3 & := \left\{
(r\cos\theta,\, r\sin\theta):\
-\frac{\pi}{2} \leq \theta < -\frac{\pi}{4},\
0 < r < 1\right\}
\end{split}
\]
(see Figure~\ref{fig:nonuniq2}).
\begin{figure}
\centering
\def\svgwidth{10cm}
\begingroup%
  \makeatletter%
  \providecommand\color[2][]{%
    \errmessage{(Inkscape) Color is used for the text in Inkscape, but the package 'color.sty' is not loaded}%
    \renewcommand\color[2][]{}%
  }%
  \providecommand\transparent[1]{%
    \errmessage{(Inkscape) Transparency is used (non-zero) for the text in Inkscape, but the package 'transparent.sty' is not loaded}%
    \renewcommand\transparent[1]{}%
  }%
  \providecommand\rotatebox[2]{#2}%
  \ifx\svgwidth\undefined%
    \setlength{\unitlength}{271.17345485bp}%
    \ifx\svgscale\undefined%
      \relax%
    \else%
      \setlength{\unitlength}{\unitlength * \real{\svgscale}}%
    \fi%
  \else%
    \setlength{\unitlength}{\svgwidth}%
  \fi%
  \global\let\svgwidth\undefined%
  \global\let\svgscale\undefined%
  \makeatother%
  \begin{picture}(1,0.52389013)%
    \put(0,0){\includegraphics[width=\unitlength]{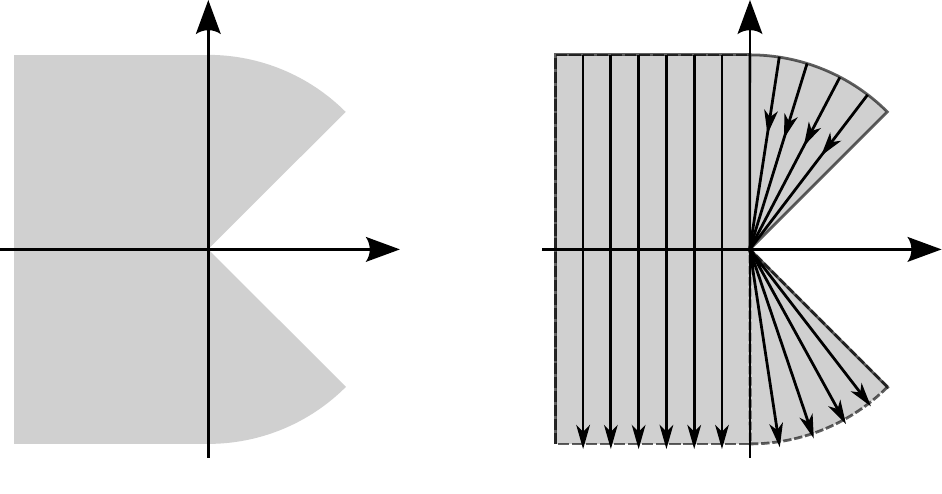}}%
    \put(0.07350109,0.27386184){\color[rgb]{0,0,0}\makebox(0,0)[lb]{\smash{$\Omega_1$}}}%
    \put(0.0882518,0.00834913){\color[rgb]{0,0,0}\makebox(0,0)[lb]{\smash{$S_1$}}}%
    \put(0.3095124,0.05260125){\color[rgb]{0,0,0}\makebox(0,0)[lb]{\smash{$S_3$}}}%
    \put(0.24638486,0.37539449){\color[rgb]{0,0,0}\makebox(0,0)[lb]{\smash{$\Omega_2$}}}%
    \put(0.2454387,0.12218202){\color[rgb]{0,0,0}\makebox(0,0)[lb]{\smash{$\Omega_3$}}}%
  \end{picture}%
\endgroup%
\caption{The set $\Omega$ in Example~\ref{r:es2}}
\label{fig:nonuniq2}
\end{figure}
Let $\phi\colon\partial\Omega\to\R$ be the function
defined by
\[
\phi(x)=
\begin{cases}
0, & x\in S_1\cup S_3, \\
 +\infty, & \text{otherwise,}
 \end{cases}
 \]
where $S_1:=[-1,0]\times\{-1\}$ and
$S_3:=\{(\cos\theta, \sin\theta):\ -\pi/2 < \theta \leq -\pi/4\}$.
Let us consider the isotropic case ($\rho(\xi) = |\xi|$)
with a constant source $f\equiv 1$.

In this example the multiplicity of $v$-components
depends on the fact that geodesics are not forward unique.
Namely, given a point $y\in S_3$ and a point 
$z\in\Omega_2$,
the curve
$\gamma:= \cray{y,0}\cup\cray{0,z}$ is a geodesic.
It is then clear that we have a lot of geodesics branching
at $0$,
so that the mass collected in the region
$\Omega_2$ is transported to the origin,
and from the origin it can be distributed 
in infinitely many ways to any point of $S_3$.
\end{ehse}

In order to exclude the phenomena depicted in
Examples~\ref{r:es1} and \ref{r:es2},
we propose the following additional geometric assumption:
\begin{itemize}
\item[(H5)]
For every $x \in \Omega$ and for every $y\in\Pi(x)$ the segment
$\oray{y,x}$ is contained in $\Omega$.
\end{itemize}

As a consequence of (H5), every geodesic through a point $x\in\Omega$ is maximal and its support is a segment.
In particular, the geodesics cannot bifurcate
(i.e., they are non-branching in the interior).


Under the assumption (H5), the Lax-Hopf function can be written as
\begin{equation}\label{f:LH2}
\uo(x) := \min\left\{\phi(y)+\pgauge(x-y):\ y\in\partial\Omega\right\},\quad
x\in \overline{\Omega},
\end{equation}
while
\begin{gather*}
\gaph =\{y\in \partial\Omega\colon\ \exists \ x\in \Omega\ \text{s.t.}\  
\oray{y,x}\subset\Omega\ \text{and}\ 
 \uo(x)=\phi(y)+\pgauge(x-y) \}
\,,
\\
\gaf =\{y\in \partial\Omega\colon\ \exists \ x\in\essspt f\
\text{s.t.}\ \oray{y,x}\subset\Omega\ \text{and}\ \uo(x)=\phi(y)+\pgauge(x-y) \}
\,.
\end{gather*}

For every $x\in\Omega$, let $\Delta(x)$ be the set of directions through $x$
\[
\Delta(x):=\left\{\frac{x-y}{\pgauge(x-y)}:\ y\in\Pi(x)\right\}\,, \qquad x\in\Omega\,,
\]
where $\Pi(x)$ is the set of projections defined in (\ref{f:proj}) that,
under assumption (H5), can be written as
\[
\Pi(x)=\{y\in \gaph\colon\ \uo(x)=\phi(y)+\pgauge(x-y)\}\,.
\]

Let $D\subset\Omega$ be the set of those points with multiple projections, that is
\[
D:=\{x\in \Omega:\ \Delta(x)\ \text{is not a singleton}\},
\]
and for every $x\in \Omega\setminus D$,
let $p(x)$ and $d(x)$ denote
the unique elements in $\Pi(x)$ and $\Delta(x)$ respectively, i.e.
\begin{equation}
\label{f:pid}
\{p(x)\} = \Pi(x),\quad
\{d(x)\} = \Delta(x),\qquad
x\in\Omega\setminus D.
\end{equation}
It can be easily checked that $\uo$ grows linearly along every segment
joining $x\in \Omega$ to $y\in \Pi(x)$. Let us denote by
$b(x)$ be the normal distance from the set $D$, defined by
\[
b(x):=
\begin{cases}
\sup\{t\geq 0;\ \uo(x+sd(x)))=\uo(x)+s,\ \forall s\in [0,t]\}, &
x\in \Omega\setminus D\,, \\
0 & x\in D\,,
\end{cases}
\]
and let $J$ be the set 
\[
J:=\bigcup_{x\in\Omega}q(x),\quad q(x):=x+b(x)d(x),
\]
where we understand that $q(x)=x$ if $x\in D$.
\begin{dhef}[Transport ray]
We shall call \textsl{transport ray} through $x\in\Omega$
any segment $\cray{p,q(x)}$, $p\in\proj(x)$.
If $\cray{p,q}$ is a transport ray, the points $p$ and $q$ will be called
respectively the \textsl{initial} and the \textsl{final point} of the ray.
\end{dhef}
It is clear from the definition that, if $x\in \Omega\setminus D$, then
there is a unique transport ray $\cray{p(x),q(x)}$ through $x$.
In this case there exists a unique number $a(x) \in (-\infty, 0)$
such that
\[
p(x) = x + a(x)\, d(x)\qquad (x\in\Omega\setminus D).
\]
Moreover, if assumption (H5) holds,
according to Definition~\ref{d:maxg} the segment
$\cray{p(x), x}$ is a maximal geodesic through $x$.
On the other hand, if $x\in D$, any segment $\cray{p,x}$, with
$p\in\proj(x)$, is a transport ray through $x$.

The transport rays correspond to the segments where $\uo$ grows
linearly with maximal slope, i.e.
\begin{equation}
\label{f:lin}
\uo(x+t\, d(x)) = \uo(x) + t\qquad
\forall x\in\Omega\setminus D,\
\forall t\in [a(x), b(x)]\,.
\end{equation}

Let $\Sigma$ be the set of those points where $\uo$ is not differentiable.
The relationships between the singular sets related to the problem are the following
(see \cite{Bi} and \cite[Prop.~6.4]{CMl}).

\begin{prop}\label{p:D}
The sets $\Sigma$, $D$ and $J$ have zero Lebesgue measure.
Moreover, $D\subset\Sigma$ and $D\subset J\subset\overline{D}$,
with possibly strict inclusions. In addition, if $\uo$ is differentiable at $x\in\Omega$, then $x$
has a unique projection and $\Delta(x)=\{D\gauge(D\uo(x))\}$.
\end{prop}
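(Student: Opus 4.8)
The plan is to prove the four assertions in the order $\Sigma$, then $D$, then $J$, each step relying on the previous one and, crucially, on the pointwise differentiability statement (which I establish first). That $\Sigma$ is $\Leb^n$-negligible is immediate: $\uo$ is $1$-Lipschitz with respect to $\pgauge$, hence Lipschitz on $\overline\Omega$ (because $\pgauge$ is equivalent to the Euclidean norm), so it is differentiable $\Leb^n$-a.e.\ by Rademacher's theorem.

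Next I would prove the ``in addition'' part, which is the conceptual core. Let $x\in\Omega\setminus\Sigma$ and pick $y\in\Pi(x)$ (nonempty, since the infimum in \eqref{f:LH} is attained at interior points). By (H5) the geodesic through $x$ ending at $y$ is the segment $\cray{y,x}$, along which $\uo$ grows linearly; setting $d:=(x-y)/\pgauge(x-y)$, this gives $\uo(x+\tau d)=\uo(x)+\tau$ for all small $\tau\le 0$, whence $\pscal{D\uo(x)}{d}=1$. On the other hand the Lipschitz bound $\uo(x+h)-\uo(x)\le\pgauge(h)$ yields, upon differentiation, $\pscal{D\uo(x)}{h}\le\pgauge(h)=\max_{\eta\in K}\pscal{h}{\eta}$ for every $h$, i.e.\ $D\uo(x)\in K$ and $\gauge(D\uo(x))\le 1$; since $\pgauge(d)=1$ forces $d\in K^0$, we also get $\gauge(D\uo(x))=\max_{\eta\in K^0}\pscal{D\uo(x)}{\eta}\ge\pscal{D\uo(x)}{d}=1$, hence $\gauge(D\uo(x))=1$ and $D\uo(x)\neq 0$. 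By Theorem~\ref{t:sch}(iii) applied to $\xi=D\uo(x)$, $D\gauge(D\uo(x))$ is the \emph{unique} point of $K^0$ at which $p\mapsto\pscal{p}{\xi}$ attains $\gauge(\xi)$; since $d$ is such a point, $d=D\gauge(D\uo(x))$, and in particular $\Delta(x)$ is a singleton. Uniqueness of the projection follows at once: two distinct projections of $x$ would give the same direction $d$ and so lie on the half-line $\{x-td:\ t>0\}$, and the one closer to $x$ would belong to $\oray{y,x}\subset\Omega$ (by (H5)), contradicting that projections lie on $\partial\Omega$. Contrapositively, $x\in D$ implies $x\in\Sigma$, so $D\subseteq\Sigma$ and $\Leb^n(D)=0$; and $D\subseteq J$ is built into the definition of $J$, since $q(x)=x$ when $x\in D$.

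It remains to treat $J$, and this is the step I expect to be the main obstacle. Every $w\in J$ is a final endpoint of a transport ray. To prove $J\subseteq\overline D$ I would argue by contradiction: suppose $w\in J$ has a neighbourhood $B(w,r)\subset\Omega\setminus\overline D$; then $w\notin D$, so it has a unique projection $p(w)$ and direction $d(w)$, and since $w$ is the final point of its transport ray, $\uo$ ceases to grow linearly along $d(w)$ immediately past $w$. Hence there are points $z_k=w+s_k\,d(w)\to w$ (with $s_k\downarrow 0$) such that $\uo(z_k)<\uo(w)+s_k$, which forces the (unique) projection $p(z_k)$ of $z_k$ to be distinct from $p(w)$; lower semicontinuity of $\phi$ together with continuity of $\uo$ then force $p(z_k)\to p(w)$. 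The coexistence near $w$ of these distinct, nearly parallel minimizing segments produces, by a connectedness/intermediate-value argument along $\partial\Omega$ exactly as in \cite[Prop.~6.4]{CMl}, points of $D$ arbitrarily close to $w$ — a contradiction.

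Finally, for $\Leb^n(J)=0$ the naive approach breaks down: $J$ is only known to be contained in $\overline D$, which may have positive measure, and $\uo$ need not be semiconcave here — under (H2) only $\gauge$ is of class $C^1$, while $\pgauge$ is merely strictly convex — so the classical argument that cut loci are Lebesgue-null (via Alexandrov's theorem) does not apply. The way around is to invoke Bianchini's disintegration of $\Leb^n\lfloor\Omega$ along the transport rays of $\uo$ \cite{Bi}: the conditional measures are absolutely continuous one-dimensional measures on the individual rays, hence assign zero mass to each ray's endpoint, and integrating over the quotient space of rays gives $\Leb^n(J)=0$ (as a byproduct, $J$ is contained in an $\mathcal{H}^{n-1}$-rectifiable set). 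The possible strictness of the inclusions $D\subseteq\Sigma$ and $D\subseteq J\subseteq\overline D$ requires no argument, being illustrated by elementary examples (a focal point with a single projection lies in $J\setminus D$).
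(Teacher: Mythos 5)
The paper does not actually prove this proposition; it is stated with a bare pointer to \cite{Bi} and to \cite[Prop.~6.4]{CMl}, so there is no in-paper argument to compare against. Judged on its own terms, your write-up is correct wherever it is explicit. The core computation --- differentiability at $x$ gives $\pscal{D\uo(x)}{d}=1$ for every $d\in\Delta(x)$ by linear growth along $\cray{p(x),x}$, the Lipschitz bound gives $D\uo(x)\in K$, hence $\gauge(D\uo(x))=1$, and strict convexity of $K^0$ via Theorem~\ref{t:sch}(iii) forces $d=D\gauge(D\uo(x))$ --- is exactly right, and combined with (H5) it yields uniqueness of the projection, hence $D\subseteq\Sigma$, $\Leb^n(D)=0$ by Rademacher, and $D\subseteq J$ by the convention $q(x)=x$ on $D$. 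Invoking the disintegration of \cite{Bi} for $\Leb^n(J)=0$ is also the intended route, but state the point precisely: the sets $T_k$ of Theorem~\ref{t:bian2} are unions of \emph{open} rays $\oray{p(x),q(x)}$ whose union has full measure, and $J$ is disjoint from every $T_k$ because a final point of one ray cannot be an interior point of another (a short argument via concatenation of geodesics and strict convexity, in the spirit of Lemma~\ref{l:uniq}); your phrase about conditional measures ``assigning zero mass to endpoints'' glosses over exactly this.

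The one genuine soft spot is $J\subseteq\overline{D}$. Your reduction is fine up to the point where you produce $z_k=w+s_k d(w)\to w$ with $p(z_k)\neq p(w)$ and $p(z_k)\to p(w)$, but the concluding ``connectedness/intermediate-value argument along $\partial\Omega$'' does not follow from what you have assembled: since the projections \emph{do} converge to $p(w)$, no discontinuity or multivaluedness of the projection map has been exhibited, and a continuous single-valued projection on $B(w,r)$ is perfectly compatible with everything you have written. The contradiction must instead be extracted from the ray geometry (the rays through $z_k$ extend past $w$ in the direction $d(w)$ while the ray through $w$ stops at $w$, and one cannot have semiconcavity here, as you correctly observe), and that is precisely the nontrivial content of \cite[Prop.~6.4]{CMl}. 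Deferring it to that reference puts you at the same level of rigour as the paper itself, but the step should be flagged as borrowed, not presented as a routine intermediate-value argument.
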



Under the assumptions (H1)--(H5) we will be able to construct
a mass transport density $\vf$ such that the pair
$(\uo, \vf)$ solves the system~\eqref{f:MK}
(see Section~\ref{s:exi}).
Unfortunately, these assumptions are not enough to ensure the 
uniqueness of the mass density $v$, due to the possibility of transporting
a fictitious amount of mass along those transport rays $\oray{p,q}$ with
both endpoints on $\partial{\Omega}$. The weak formulation of 
the continuity equation (Definition~\ref{d:sol}(iii)) prevents this possibility
only for $q\in \partial{\Omega}\setminus \gafc$. 
The following example shows that a single $q\in\gafc$ may be the final point of a set of rays covering a region with positive Lebesgue measure.

\begin{figure}
\centering
\def\svgwidth{10cm}
\begingroup%
  \makeatletter%
  \providecommand\color[2][]{%
    \errmessage{(Inkscape) Color is used for the text in Inkscape, but the package 'color.sty' is not loaded}%
    \renewcommand\color[2][]{}%
  }%
  \providecommand\transparent[1]{%
    \errmessage{(Inkscape) Transparency is used (non-zero) for the text in Inkscape, but the package 'transparent.sty' is not loaded}%
    \renewcommand\transparent[1]{}%
  }%
  \providecommand\rotatebox[2]{#2}%
  \ifx\svgwidth\undefined%
    \setlength{\unitlength}{308.15035278bp}%
    \ifx\svgscale\undefined%
      \relax%
    \else%
      \setlength{\unitlength}{\unitlength * \real{\svgscale}}%
    \fi%
  \else%
    \setlength{\unitlength}{\svgwidth}%
  \fi%
  \global\let\svgwidth\undefined%
  \global\let\svgscale\undefined%
  \makeatother%
  \begin{picture}(1,0.44804459)%
    \put(0,0){\includegraphics[width=\unitlength]{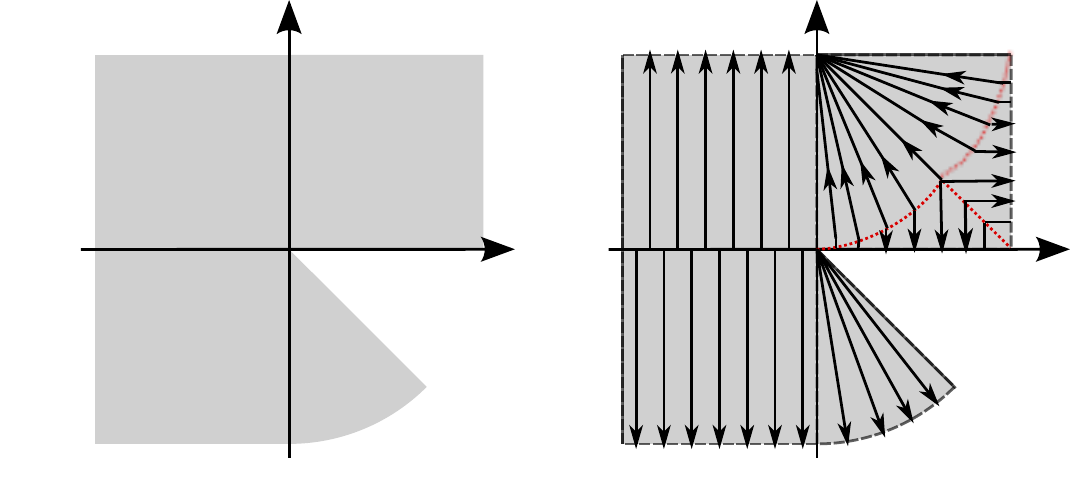}}%
    \put(0.32487796,0.3091701){\color[rgb]{0,0,0}\makebox(0,0)[lb]{\smash{$\Omega_2$}}}%
    \put(0.21808551,0.00734727){\color[rgb]{0,0,0}\makebox(0,0)[lb]{\smash{$S_1$}}}%
    \put(0.12722078,0.24099944){\color[rgb]{0,0,0}\makebox(0,0)[lb]{\smash{$\Omega_1$}}}%
    \put(0.29854134,0.09092701){\color[rgb]{0,0,0}\makebox(0,0)[lb]{\smash{$\Omega_3$}}}%
    \put(0.34789228,0.13715403){\color[rgb]{0,0,0}\makebox(0,0)[lb]{\smash{$S_3$}}}%
    \put(0.32193092,0.22801876){\color[rgb]{0,0,0}\makebox(0,0)[lb]{\smash{$S_2$}}}%
    \put(0.45173769,0.26696079){\color[rgb]{0,0,0}\makebox(0,0)[lb]{\smash{$S_2$}}}%
    \put(0.29596957,0.40974823){\color[rgb]{0,0,0}\makebox(0,0)[lb]{\smash{$S_4$}}}%
    \put(0.12722078,0.40974823){\color[rgb]{0,0,0}\makebox(0,0)[lb]{\smash{$S_1$}}}%
    \put(-0.00258599,0.21503809){\color[rgb]{0,0,0}\makebox(0,0)[lb]{\smash{$S_5$}}}%
  \end{picture}%
\endgroup
\label{fig:nonuniq3}
\end{figure}

\begin{ehse}
\label{r:es3}
Let $\Omega:=\Omega_1 \cup\Omega_2\cup\Omega_3 \subset \R^2$,
where
\begin{gather*}
\Omega_1  :=(-1,0)\times(-1,1),\qquad \Omega_2   := [0,1)\times(0,1), \\
\Omega_3  :=\left\{(r \cos\theta, r \sin\theta):\ r\in (0,1), \ 
-\frac{\pi}{2}<\theta<-\frac{\pi}{4}\right\}\,,
\end{gather*}
and let $\phi\colon\partial \Omega \to \R$ be a function such that, in the isotropic case with source $f=1$,  the geometry of transport rays is the one depicted in Figure~\ref{fig:nonuniq3}.
In particular $\gaph=\gaf=S_1\cup S_2$, where
\begin{gather*}
S_1 :=\left((-1,0]\times \{-1,1\}\right )\cup \{(\cos \theta, \sin\theta): -\frac{\pi}{2}<\theta<-\frac{\pi}{4}\}\,.  \\
 S_2 :=\left((0,1)\times\{0\}\right)\cup \left(\{1\}\times(0,1)\right)
\end{gather*}
(see the details in Section~\ref{s:examples}).
Notice that $(0,0)\in\gaphc$ is the final point of all the transport rays covering $\Omega_3$. Hence, we can construct a  $w\in L^1_+(\Omega_3)$ such that $\dive(w\, D\uo)=0$, which can be added, once prolonged to zero in $\Omega\setminus \Omega_3$, to any admissible  $v$, loosing the uniqueness of mass transport density.
\end{ehse}

In order to exclude the behaviour described in Example~\ref{r:es3}
we need the following additional assumption:
\begin{itemize}
\item[(H6)]
$\left(\overline{J\cap\partial\Omega}\right) \cap \gaphc = \emptyset$.
\end{itemize}
It is worth to remark that, as a consequence of (H5),
one has $(J\cap\partial\Omega)\cap\gaph = \emptyset$
(see Lemma~\ref{l:uniq} below), 
so that
(H6) can be viewed as a mild
additional assumption in order to separate
the sets $J\cap\partial\Omega$ and $\gaph$.


\begin{lemma}\label{l:uniq}
Assume that $(H1)$, $(H2)$, $(H4)$ and $(H5)$ hold, and let
$\cray{p_1, q_1}$, $\cray{p_2, q_2}$ be two distinct non-trivial
transport rays.
Then $q_1\neq p_2$.
In other words, $J\cap\gaph = \emptyset$.
\end{lemma}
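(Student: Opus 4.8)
The plan is to argue by contradiction: suppose $q_1 = p_2 =: z$, so that the final point of the first ray coincides with the initial point of the second. Since $\cray{p_2,q_2}$ is a non-trivial transport ray, $z = p_2 \in \proj(x_2)$ for some $x_2\in\Omega$ lying on that ray, and in particular $z\in\partial\Omega$ (initial points of transport rays lie on $\partial\Omega$). On the other hand, $z = q_1$ is the final point of $\cray{p_1,q_1}$; writing $q_1 = q(x_1) = x_1 + b(x_1) d(x_1)$ for a suitable $x_1\in\Omega\setminus D$ on the first ray, the defining property of $b$ gives that $\uo$ grows with maximal slope along $\cray{x_1, q_1}$, i.e. $\uo(q_1) = \uo(x_1) + b(x_1)\pgauge(\cdot)$-normalized, and crucially the growth relation cannot be extended past $q_1$ in the direction $d(x_1)$.

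The key step is to derive a contradiction from having a maximal geodesic (the extension of the second ray toward $q_2$, or simply the segment $\cray{p_2,q_2}$ itself) emanating from $z = q_1$ into $\Omega$. First I would record that, by \eqref{f:lin} and the definition of $p(x_2), q_2$, the function $\uo$ is affine with slope one (w.r.t.\ $\pgauge$) along the whole segment $\cray{p_2, q_2} = \cray{z, q_2}$, so picking any interior point $x_2\in\oray{z,q_2}$ we get $\uo(x_2) = \uo(z) + \pgauge(x_2 - z)$ and, by (H5), $\oray{z,x_2}\subset\Omega$. Now I would combine this with the first ray: along $\cray{p_1, q_1} = \cray{p_1, z}$ we likewise have $\uo(z) = \uo(p_1) + \pgauge(z - p_1)$ with $\oray{p_1,z}\subset\Omega$. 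Concatenating, $y := p_1 \in\partial\Omega$ satisfies $\uo(x_2) = \phi(y) + \pgauge(z-p_1) + \pgauge(x_2 - z)$. If the two direction vectors $z - p_1$ and $x_2 - z$ are not positively proportional, then by the strict triangle inequality in Theorem~\ref{t:sch}(ii) we would have $\pgauge(x_2 - p_1) < \pgauge(z-p_1)+\pgauge(x_2-z)$, forcing $\uo(x_2) < \phi(p_1) + \pgauge(x_2 - p_1)$, which contradicts the characterization $\uo(x_2) = \min_{y\in\partial\Omega}\{\phi(y)+\pgauge(x_2-y)\}$ from \eqref{f:LH2}. Hence $z - p_1$ and $x_2 - z$ are positively proportional, i.e. the concatenated segment $\cray{p_1, x_2}$ is a single segment on which $\uo$ grows with maximal slope and which contains $z$ in its relative interior.

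This is the crux: $z$ lies in the relative interior of a maximal-slope segment $\cray{p_1, x_2}\subset\overline\Omega$ with $\oray{p_1,x_2}\subset\Omega$ (using (H5) on both halves), so $d(x_1) = (z-p_1)/\pgauge(z-p_1)$ equals the direction of $x_2 - z$, and $\uo(z + s\,d(x_1)) = \uo(z) + s$ for $s$ in a neighbourhood of $0$. But then, by the very definition of $b$, the value $b(x_1)$ cannot equal the distance from $x_1$ to $z$: the relation defining $b$ extends strictly beyond $z$, so $q(x_1) = x_1 + b(x_1)d(x_1) \ne z$, contradicting $q_1 = z$. Therefore $q_1 \ne p_2$. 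The final sentence, $J\cap\gaph = \emptyset$, is then immediate: a point of $J$ is by definition some $q(x)$, i.e. the final point of a transport ray, while a point of $\gaph$ is the initial point of a (maximal) transport ray, and the statement just proved says these can never coincide.

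The main obstacle I anticipate is handling the degenerate and boundary cases cleanly: when $x_1\in D$ (so $q(x_1) = x_1$ and there is no well-defined $d(x_1)$), and when $q_2\in\partial\Omega$ as well, so that one must be careful that the interior point $x_2$ genuinely exists with $\oray{z,x_2}\subset\Omega$ rather than the second ray being somehow trivial or tangent to the boundary. One should also make sure the strict triangle inequality is applied in the correct orientation given the possible asymmetry of $\pgauge$; Theorem~\ref{t:sch}(ii) is stated for $\pgauge$ and its equality case is exactly the positive-proportionality condition needed, so this should go through, but the bookkeeping of which point plays the role of $\xi$ and which of $\eta$ deserves attention.
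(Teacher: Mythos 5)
Your argument is essentially the paper's: assume $q_1=p_2=:z$, concatenate the two rays, use \eqref{f:lin} to see that $\uo$ increases with unit speed along the broken path from $p_1$ to an interior point $x_2$ of the second ray, and derive a contradiction --- the paper compresses this into ``the concatenation is a geodesic, contradicting (H5)'', while you unpack the two sub-cases (non-collinear, via the strict triangle inequality for $\pgauge$ from Theorem~\ref{t:sch}(ii) together with \eqref{f:LH2}; collinear, via the maximality built into the definition of $b$, where the paper would instead note that $\oray{p_1,x_2}$ then contains the boundary point $z$, violating (H5) directly). One slip: from $\pgauge(x_2-p_1)<\pgauge(z-p_1)+\pgauge(x_2-z)$ you should conclude $\uo(x_2)>\phi(p_1)+\pgauge(x_2-p_1)$, not ``$<$'' --- the inequality as you wrote it is perfectly compatible with the minimum formula, and it is only the reversed (correct) strict inequality that contradicts \eqref{f:LH2}. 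With that sign corrected, and with the observation that $q_1=p_2\in\partial\Omega$ automatically rules out the degenerate case $x_1\in D$ (since then $q(x_1)=x_1\in\Omega$), the proof is complete.
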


\begin{proof}
Assume by contradiction that $q_1 = p_2$,
and let $\gamma\in\ccurve{p_1,q_2}$ be the curve whose
support is $\cray{p_1, p_2}\cup\cray{p_2, q_2}$.
By \eqref{f:lin} we get
\[
\uo(q_2) = \uo(p_2) + \pgauge(q_2-p_2)
= \uo(p_1) + \pgauge(p_2-p_1) + \pgauge(q_2-p_2)
= \uo(p_1) + \len{\gamma},
\]
hence the curve $\gamma$ is a geodesic,
in contradiction with (H5).
\end{proof}


\section{Existence of solutions}
\label{s:exi}

For the reader's convenience we collect here all the assumptions
that have been introduced in the previous section.
\begin{itemize}
\item[(H1)] $\Omega$ is an open, bounded, connected subset of
$\R^n$ with Lipschitz boundary;
\item[(H2)] $\gauge$ is the gauge function of a convex set $K\subseteq \R^n$ satisfying (\ref{f:hypk});
\item[(H3)] $f$ belongs to $L^1_+(\Omega)$;
\item[(H4)] $\phi\colon \partial\Omega \to (-\infty,+\infty]$ is a l.s.c.\ function, $\phi\not\equiv +\infty$;
\item[(H5)]
For every $x \in \Omega$ and for every $y\in\Pi(x)$ the segment
$\oray{y,x}$ is contained in $\Omega$;
\item[(H6)]
$\left(\overline{J\cap\partial\Omega}\right) \cap \gaphc = \emptyset$.
\end{itemize}

\medskip
This section will be devoted to the proof of the following result.
\begin{teor}\label{t:eu1}
Let $(H1)$-$(H6)$ hold.
Then there exists a unique $\vf\in L^1_+(\Omega)$ such that the pair $(\uo,\vf)$ is a solution
to the PDEs system \eqref{f:MK}.
\end{teor}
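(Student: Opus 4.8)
The plan is to split the statement into an existence part and a uniqueness part.

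For **existence**, I would proceed by an explicit construction of $\vf$ using the geometry of transport rays encoded in the map $x \mapsto (p(x), d(x), a(x), b(x))$. The key tool is the disintegration of the Lebesgue measure along the transport rays of $\uo$ due to Bianchini (cited as \cite{Bi}): since $D$ and $J$ have zero Lebesgue measure (Proposition \ref{p:D}), a.e. point of $\Omega$ lies on a unique transport ray $\cray{p(x), q(x)}$, and $\meas^n\lfloor\Omega$ disintegrates into a family of measures, each absolutely continuous with respect to the $1$-dimensional Hausdorff measure on the corresponding ray, with an explicit Jacobian density (a "Jacobian of the ray map" of the form $(b-s)^{n-1}$ up to a positive factor coming from the curvature of $\partial K^0$, as in \cite{CMl}). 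One then defines, for a.e. $x$ lying on the ray with initial point $p$ and parameter $t = \uo(x) - \uo(p)$,
\[
\vf(x) := \frac{1}{J_n(x)} \int_{\{t' \ge t \text{ on the ray through } x\}} f(\cdot)\, J_n(\cdot)\, dt',
\]
i.e. $\vf(x)$ is obtained by integrating the source $f$ downstream along the ray through $x$, divided by the local Jacobian factor. By construction $\vf \ge 0$, and the integrability $\vf \in L^1(\Omega)$ follows from Fubini/Tonelli along the disintegration together with $f \in L^1(\Omega)$ (the total mass of $\vf$ is controlled by $\diam(\Omega)$ times $\|f\|_{L^1}$, after using $\pgauge \sim |\cdot|$). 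Condition (ii) of Definition \ref{d:sol} is automatic since $\gauge(D\uo) = 1$ a.e. The weak continuity equation (iii) is then verified by plugging a test function $\psi \in \test$, writing the left-hand side via the disintegration, using that $D\gauge(D\uo(x)) \cdot D\psi$ equals the derivative of $t \mapsto \psi$ along the ray (since $D\gauge(D\uo(x))$ is the ray direction — this uses Theorem \ref{t:sch}(iii) and Proposition \ref{p:D}), and integrating by parts in $t$ along each ray. The boundary terms at the endpoints vanish: at the final point $q(x)$ the term vanishes because $\vf$ is defined as a downstream integral (it is zero past the last mass), while at the initial point $p(x) \in \gaph$ one needs $\psi$ to be supported away from $\gafc$ — here one uses that if the ray carries no mass from $\essspt f$, the downstream integral is already zero before reaching $p$, and if it does carry mass then $p \in \gaf$ and $\psi$ vanishes near $p$. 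Assumption (H5) is what guarantees rays are genuine segments so that this ray-by-ray integration by parts is legitimate and non-branching.

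For **uniqueness**, suppose $(\uo, v)$ is another solution; set $w := v - \vf$, which satisfies $\int_\Omega w \pscal{D\gauge(D\uo)}{D\psi}\, dx = 0$ for all $\psi \in \test$, i.e. $w$ is a divergence-free "flux" along the rays, vanishing through $\partial\Omega \setminus \gafc$ in the weak sense. Disintegrating again, on a.e. ray the function $t \mapsto w(\cdot)\,J_n(\cdot)$ has zero distributional derivative, hence is constant along the ray; call this constant $c(\text{ray})$. If the ray has its final point $q$ in $\partial\Omega \setminus \gafc$, testing against functions supported near $q$ forces $c = 0$. The remaining case is exactly the one isolated in Example \ref{r:es3}: rays whose final point $q = q(x) \in J \cap \partial\Omega$ lies in $\gafc$. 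This is where assumption (H6) enters: since $\overline{J \cap \partial\Omega}$ is disjoint from $\gaphc \supseteq \gafc$, no transport ray can have its final point on $\partial\Omega$ inside $\gafc$; combined with Lemma \ref{l:uniq} (which excludes $J \cap \gaph$), every ray with $c \ne 0$ would have to exit through a forbidden portion of the boundary, a contradiction. Hence $c \equiv 0$ on a.e. ray, so $w = 0$ a.e. by the disintegration, i.e. $v = \vf$.

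The **main obstacle** is the rigorous handling of the boundary terms in the integration by parts along rays and the measurability/integrability bookkeeping in the disintegration — in particular checking that the "downstream integral" defining $\vf$ produces an $L^1$ function and that the endpoint contributions genuinely vanish given only $\psi \in \test$ (not $\psi$ compactly supported in $\Omega$). This is precisely the point where the careful choice of the test-function class $\test$, the definition of $\gaf$ via $\essspt f$, and assumption (H6) must all be used in concert; the interior non-branching provided by (H5) reduces everything else to one-dimensional calculus on each ray.
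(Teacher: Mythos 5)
Your strategy is, in all essentials, the paper's own: existence via Bianchini's disintegration of $\Leb^n$ along transport rays, the ``downstream integral'' formula for $\vf$ (the paper's \eqref{f:vf}, with your Jacobian $J_n$ playing the role of the weight $\alpha$ solving the ODE \eqref{f:alpha}), a ray-by-ray integration by parts to verify the weak equation with the endpoint terms killed exactly as you describe, and uniqueness by reducing to the terminal condition at the final points of rays, with (H6) separating $\overline{J\cap\partial\Omega}$ from $\gaphc$.

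There is one step in your uniqueness argument that does not work as literally stated. You claim that for a ray with final point $q\in\partial\Omega\setminus\gafc$, ``testing against functions supported near $q$ forces $c=0$.'' But the integration by parts only yields the aggregate identity
\[
\sum_k \int_{B_k} (w\alpha\psi)(q(x))\,d\mathcal{H}^{n-1}(x)=0,
\]
and the endpoint map $x\mapsto q(x)$ is badly non-injective: a single boundary point can be the final point of a positive-measure bundle of rays (this is exactly the geometry of Example~\ref{r:es3}). Hence varying $\psi$ only forces the \emph{push-forward} of $c\,\mathcal{H}^{n-1}$ under $x\mapsto q(x)$ to vanish; since $w=v-\vf$ has no sign, cancellation on the fibers is a priori possible and $c$ need not vanish ray by ray. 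The paper avoids this by never subtracting: it works with $v$ itself, for which $v\alpha\geq 0$, chooses (using (H6)) a single test function $\psi\geq 0$ with $\psi>0$ on the set $\mathcal{B}$ of boundary final points and $\psi=0$ on $\gaf$ and on the initial points not in $\gafc$, and concludes $(v\alpha)(q(x))=0$ a.e.\ from the nonnegativity of the integrand. Your argument can be repaired along the same lines by observing that the constant on each ray equals $\lim_{t\to b(x)^-}(v\alpha)(x+td(x))\geq 0$, because $(\vf\alpha)$ already tends to $0$ at every final point; but this sign observation is the missing ingredient, not a formality. (A second, minor, omission: rays whose final point lies in $\Omega$ must also be treated, but there the terminal condition \eqref{f:vfbc} is supplied directly by Bianchini's theorem for any solution of the interior weak formulation.)
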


This theorem gives a partial uniqueness result
for the transport density, since it states that the $v$-component
associated to the profile $u = \uo$ is unique.
We shall see in Section~\ref{s:uni}, Theorem~\ref{t:uniu},
that, indeed, $\vf$ is the only admissible $v$-component of every
solution $(u,v)$.

\smallskip
Since $\uo\in\Xf$, and $\gauge(D\uo) = 1$ a.e.~in $\Omega$, 
in order to prove the existence part of Theorem~\ref{t:eu1}
it is enough to show that
\begin{equation}\label{f:vfsol}
\int_\Omega \vf \pscal{D\gauge(D\uo)}{D\psi}\, dx = \int_\Omega f \psi \, dx\,, \qquad
\forall\psi\in\test.
\end{equation}

This can be done with a minor effort, since it turns out that we can choose as $v$-component
the same function constructed in \cite{Bi,CMl} for the case of admissible boundary data,
that is the one that satisfies
\begin{equation}\label{f:vfsol2}
\int_\Omega \vf \pscal{D\gauge(D\uo)}{D\psi}\, dx = \int_\Omega f \psi \, dx\,, \qquad
\forall\psi\in C^{\infty}_c(\Omega).
\end{equation}

In order to write the explicit form of the function $\vf$,
we need the fundamental result concerning the disintegration of the Lebesgue measure
along the transport rays due to S.~Bianchini (see \cite[Thm.\ 5.8]{Bi}).
The proof of this formula is mainly based on the fact that
the divergence of the vector field $d$
defined in \eqref{f:pid}
is a locally finite Radon measure in $\Omega$.
Moreover, decomposing $\dive d$ into its absolutely continuous and
singular part (w.r.t.\ the Lebesgue measure),
$\dive d = (\dive d)_{ac}\Leb^n +(\dive d)_s$,
it turns out that $(\dive d)_s$ is a positive measure in $\Omega$
(see \cite{Bi}, Section 5).
These properties allow to describe the evolution of
the Hausdorff measure of the
$(n-1)$-dimensional sections of ``cylinders'' of transport rays
(see Remark~\ref{r:haus}).

\begin{teor}\label{t:bian2}
Let $(H1)$, $(H2)$ and $(H4)$ hold.
Then there exists a sequence $\{A_k\}_{k\in\N}$ of subsets of $\Omega$ with the following properties.
\begin{itemize}
\item[(i)] $A_k\subset (\Omega\setminus D) \cap \{\pscal{x}{e_{j_k}}=z_k\}$ for some $j_k\in\{1,\ldots,n\}$ and $z_k\in\R$.
\item[(ii)] $A_k$ is measurable w.r.t.\ the $(n-1)$-dimensional Hausdorff measure.
\item[(iii)] The sets $T_k :=\bigcup_{x\in A_k}\oray{p(x),q(x)}$, $k\in\N$,
are pairwise disjoint,  Lebesgue measurable subsets of $\Omega$, and
$\Leb^n(\Omega\setminus \bigcup_k T_k) = 0$.
\item[(iv)] For every $h\in L^1(\Omega)$,
the function 
$t\mapsto h(x+t d(x))\,\alpha(t d_{j_k}(x), x)$,
where
$d_{j_k}(x):=\pscal{d(x)}{e_{j_k}}$,
belongs to $L^1(a(x), b(x))$ for
every $x\in \bigcup_k A_k$.
Moreover, the disintegration formula
\[
\int_\Omega h\, dx
 = \sum_k \int_{A_k}\left(\int_{a(x)}^{b(x)}h(x+t d(x))
 \,\alpha(t d_{j_k}(x), x)\,dt \right)
d_{j_k}(x)\, d \mathcal{H}^{n-1}(x)
\]
holds, and
for $\mathcal{H}^{n-1}$-a.e. $x\in \bigcup_k A_k$ the function
$\alpha(\cdot, x)$ is the solution
of the linear ODE
\begin{equation}\label{f:alpha}
\begin{cases}
\dfrac{d}{dt}\alpha(t d_{j_k}(x), x) =
(\dive d)_{ac}(x+td(x))\,\alpha(t d_{j_k}(x), x),
& t\in (a(x) , b(x))\,, \\
\alpha(0, x)=1\,.
\end{cases}
\end{equation}
Moreover, this solution is strictly positive.
\end{itemize}
\end{teor}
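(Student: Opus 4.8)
The plan is to build the partition of $\Omega$ (modulo a null set) into ``cylinders of transport rays'' directly from the vector field $d$ defined in \eqref{f:pid}, and then to obtain the disintegration formula via an essentially one‑dimensional change of variables along the rays, with the ODE \eqref{f:alpha} governing the Jacobian. First I would record the structural facts about $d$ and the singular set: by Proposition~\ref{p:D} the sets $\Sigma$, $D$, $J$ are Lebesgue‑null, $d$ is a Borel vector field on $\Omega\setminus D$ with $\gauge^0(d)$‑unit direction, and — this is the key input borrowed from Bianchini's work — the distributional divergence $\dive d$ is a locally finite Radon measure on $\Omega$ whose singular part $(\dive d)_s$ is nonnegative; equivalently, $d$ has bounded deformation along rays and the Lebesgue measure disintegrates along the integral curves of $d$. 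I would then cover $\Omega\setminus(D\cup\Sigma)$ by countably many ``graph‑like'' pieces: for each $x$ outside the singular set the ray $\oray{p(x),q(x)}$ has a well‑defined direction $d(x)$, so one of the coordinate hyperplanes $\{\pscal{\cdot}{e_j}=z\}$ is transversal to it; using continuity of $d$ on the points of differentiability of $\uo$ (again Proposition~\ref{p:D}) and a Lindel\"of/countable‑basis argument, one selects a countable family of sets $A_k$ lying in such hyperplanes, each $A_k$ consisting of points whose rays cross $A_k$ exactly once and are uniformly transversal to it; shrinking and relabelling makes the tubes $T_k:=\bigcup_{x\in A_k}\oray{p(x),q(x)}$ pairwise disjoint, and since every ray through a point of $\Omega\setminus(D\cup\Sigma)$ meets some $A_k$, the union $\bigcup_k T_k$ exhausts $\Omega$ up to the null set $D\cup\Sigma$. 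This gives (i), (ii), (iii).

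For (iv) I would fix one tube $T_k$ and set up the bijective Lipschitz chart $(x,t)\mapsto x+t\,d(x)$ from a subset of $A_k\times\R$ onto $T_k$, so that the area formula / coarea formula on $T_k$ reduces $\int_{T_k} h\,dx$ to an integral over $A_k$ of one‑dimensional integrals along the rays, the extra factor $d_{j_k}(x)=\pscal{d(x)}{e_{j_k}}$ coming from the tilt of the ray relative to the transversal hyperplane $\{\pscal{\cdot}{e_{j_k}}=z_k\}$, exactly as in the statement. The density $\alpha(\cdot,x)$ is then identified as the one‑dimensional Jacobian of this chart along the ray through $x$; differentiating the relation ``push‑forward of $\alpha\,dt$ under the flow of $d$ equals $\Leb^n$ restricted to the ray'' and using that $\dive d$ is the rate of change of this Jacobian, one gets that $\alpha(\cdot,x)$ solves \eqref{f:alpha} with $\alpha(0,x)=1$, at least with $(\dive d)_{ac}$ in place of $\dive d$; the nonnegativity of the singular part $(\dive d)_s$ is what guarantees that discarding it only decreases the growth rate, so the absolutely continuous ODE \eqref{f:alpha} has a (unique, positive) solution and $\alpha$ is bounded above by it while staying strictly positive because the right‑hand side is linear and $\alpha(0,x)=1>0$. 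Summing over $k$ and using (iii) yields the disintegration formula for all $h\in L^1(\Omega)$, first for $h\geq0$ by monotone convergence and then in general by splitting into positive and negative parts; in particular the integrand $t\mapsto h(x+td(x))\,\alpha(td_{j_k}(x),x)$ is in $L^1(a(x),b(x))$ for $\mathcal H^{n-1}$‑a.e.\ $x\in A_k$, and by a further density argument (approximating $h$ in $L^1$) one upgrades this to \emph{every} $x\in\bigcup_k A_k$ after a suitable choice of representatives, as stated.

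The main obstacle, and the part I expect to be genuinely delicate rather than bookkeeping, is the rectifiability/measurability of the construction: one must ensure that the chosen $A_k$ are $\mathcal H^{n-1}$‑measurable, that the tube map $(x,t)\mapsto x+t\,d(x)$ is sufficiently regular (Lipschitz on each piece) for the area formula to apply, and — most seriously — that $(\dive d)_s\geq0$ so that the Jacobian $\alpha$ does not vanish and the one‑dimensional sections genuinely ``absorb'' all of $\Leb^n$ with no loss of mass onto the singular set $J$; this nonnegativity, together with the fact that $\dive d$ is a Radon measure at all, is precisely the deep content of Bianchini's disintegration theorem, and here I would invoke \cite[Thm.~5.8]{Bi} (as the statement itself indicates) rather than reprove it, limiting the work to verifying that our geometric hypotheses (H1)–(H5) put us in the setting to which that theorem applies — in particular that (H5) makes the transport rays honest segments with endpoints respecting $\Omega$, so that the vector field $d$ is exactly of the type considered there.
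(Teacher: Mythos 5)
Your proposal is correct and follows essentially the same route as the paper: the paper gives no proof of this theorem but imports it verbatim from Bianchini \cite[Thm.~5.8]{Bi}, exactly as you ultimately do, and your surrounding sketch (the divergence of $d$ being a locally finite Radon measure with nonnegative singular part, and $\alpha$ as the one-dimensional Jacobian of the ray chart) matches the paper's own commentary preceding the statement and in Remark~\ref{r:haus}. The only minor quibble is that the theorem is stated under (H1), (H2), (H4) alone, so your appeal to (H5) to make the rays segments is not needed at this stage.
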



We remark that the decomposition introduced in 
Theorem~\ref{t:bian2} is clearly not unique.
In the following we shall always assume that such a decomposition has been fixed.
Once the decomposition is given, we shall use the notation
\[
\alpha(x+t d(x)) \equiv \alpha(t d_{j_k}(x), x)\, d_{j_k}(x),\qquad
k\in\N,\ x\in A_k,\ t\in (a(x), b(x)),
\]
so that $\alpha$ is defined and strictly positive on the set
\begin{equation}
\label{f:omegap}
\Omega' := \left\{
x + td(x):\ x\in\bigcup_k A_k;\ t\in (a(x), b(x))
\right\}
= \bigcup_k T_k
\end{equation}
i.e., by Theorem~\ref{t:bian2}(iii), almost everywhere on $\Omega$.
For every $x\in\Omega'$ the ODE \eqref{f:alpha}
along transport rays can be written as
\[
\frac{d}{dt} \alpha(x + td(x)) = (\dive d)_{ac}(x+td(x))\,
\alpha(x+t d(x)),
\]
and $\alpha = 1$ on $\bigcup_k A_k$,
while the disintegration formula for the Lebesgue measure given
in Theorem~\ref{t:bian2}(iv) simply becomes
\begin{equation}
\label{f:disint}
\int_\Omega h\, dx
 = \sum_k \int_{A_k}\left(\int_{a(x)}^{b(x)}h(x+t d(x))\alpha(x+t d(x))\,dt \right)
\, d \mathcal{H}^{n-1}(x)\,.
\end{equation}

\begin{rk}\label{r:haus}
It can be of interest to understand the
geometrical meaning of the function $\alpha(t,x)$
in Theorem~\ref{t:bian2}(iv).
For fixed $k\in\N$ and $t\in\R$, let us define
the map
\[
\varphi^t\colon A_k\to \R^n,
\qquad x\mapsto \varphi^t(x) := x+t\, \frac{d(x)}{d_{j_k}(x)}\,,
\]
and let $A_k^t := \varphi^t(A_k)$.
Since $A_k$ is contained in the hyperplane
$\{x:\ \pscal{x}{e_{j_k}} = z_k\}$, we have that
$A_k^t\subset \{x: \pscal{x}{e_{j_k}} = z_k + t\}$.
Let us consider the measure
$\mu$ on $A_k$ such that
the push-forward $(\varphi^t)_{\sharp}\mu$ of $\mu$ is
$\mathcal{H}^{n-1}\lfloor A_k^t$, that is,
\begin{equation}
\label{f:mu}
\mu\left((\varphi^t)^{-1}(F)\right) =
\mathcal{H}^{n-1}(F)
\qquad
\text{for every}\ \mathcal{H}^{n-1}\text{-measurable set}\
F\subseteq A_k^t.
\end{equation}
It turns out that 
the measure $\mu$ defined by \eqref{f:mu} 
is absolutely continuous w.r.t.\
$\mathcal{H}^{n-1}\lfloor A_k$  and
$\mu = \alpha(t,x) \mathcal{H}^{n-1}\lfloor A_k$
(see \cite[Lemma~5.4]{Bi}).


Let $C\subset\Omega$ be a ``cylinder'' of rays of the form
\[
C = \left\{x + s d(x):\ x\in A_k, \ s\in (a(x),b(x))\right\}\,,
\]
and let $\chi_C$ denote its characteristic function. 
Then, for every $h\in L^1(C)$, using Fubini's theorem 
and the definition of $\mu$ we have that
\[
\begin{split}
\int_C h\, d\Leb^n & =
\int_{\R} \int_{A_k^t} h\, \chi_C\, d\mathcal{H}^{n-1}\, dt
\\ & =
\int_{\R} \int_{A_k^t}
(h\, \chi_C)(\varphi^t(x)) \alpha(t,x)\, d\mathcal{H}^{n-1}(x)\, dt
\\ & = \int_{A_k} \int_{a(x)}^{b(x)}
h(x+s\, d(x)) \alpha(s d_{j_k}(x), x) d_{j_k}(x)\, ds\, d\mathcal{H}^{n-1}(x)\,.
\end{split}
\]
Hence, roughly speaking, the disintegration formula (\ref{f:disint}) corresponds to covering
$\Omega$ with cylinders of transport rays and applying Fubini's theorem on each cylinder.
\end{rk}

We are now in a position to show that the problem of finding the solution $v$ to the equation
\begin{equation}\label{f:vpb}
\int_\Omega v \pscal{D\gauge(D\uo)}{D\psi}\, dx = \int_\Omega f \psi \, dx\,, \qquad
\forall\psi\in\test
\end{equation}
can be solved by the Method of Characteristics.
We start recalling another result proved in \cite[Sect.~7]{Bi}.

\begin{teor}\label{f:bian3}
Let $(H1)$--$(H4)$ hold.
If $v\in L^1_+(\Omega)$ satisfies
\begin{equation}\label{f:vpb2}
\int_\Omega v \pscal{D\gauge(D\uo)}{D\psi}\, dx = \int_\Omega f \psi \, dx\,, \qquad
\forall\psi\in C^{\infty}_c(\Omega),
\end{equation}
then for a.e.\ $x\in\Omega$ the function $v$ is locally absolutely continuous along the ray
$t\mapsto x+t d(x)$, $t\in (a(x),b(x))$, and satisfies
\begin{equation}\label{f:ode}
\frac{d}{dt}\left[
v(x+t d(x))\, \alpha(x+t d(x))\right]
= - f(x+t d(x))\, \alpha(x+t d(x))\,,
\quad
t\in (a(x),b(x)).
\end{equation}
Moreover, if the final point $q(x) := x + b(x) d(x)$ belongs to $\Omega$, then
\begin{equation}\label{f:vfbc}
\lim_{t\to b(x)^-} v(x+t d(x))\, \alpha(x+t d(x)) = 0.
\end{equation}
Finally, the function $\vf\colon\Omega\to\R$
defined by
\begin{equation}\label{f:vf}
\vf(x) :=
\int_0^{b(x)} f(x+t d(x))\, \frac{\alpha(x+t d(x))}{\alpha(x)}\, dt,
\qquad\text{a.e.}\ x\in \Omega\,,
\end{equation}
belongs to $L^1_+(\Omega)$
and is a solution to~\eqref{f:vpb2}.
\end{teor}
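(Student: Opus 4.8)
\textbf{Proof plan for Theorem~\ref{f:bian3}.}
The plan is to reduce the PDE \eqref{f:vpb2} to an ODE along each transport ray by means of the disintegration formula \eqref{f:disint}, and then to solve that ODE explicitly. First I would test \eqref{f:vpb2} against functions of the form $\psi(x) = \eta(\uo(x))\,\zeta(x)$, or more precisely use the fact that $D\gauge(D\uo(x)) = d(x)$ $\Leb^n$-a.e.\ (this follows from Proposition~\ref{p:D} together with Theorem~\ref{t:sch}(iii): where $\uo$ is differentiable, $\Delta(x) = \{D\gauge(D\uo(x))\}$ and $d(x)$ is the unique element of $\Delta(x)$). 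Hence the left-hand side of \eqref{f:vpb2} is $\int_\Omega v\,\pscal{d(x)}{D\psi(x)}\,dx$, and since $t\mapsto\psi(x+td(x))$ has derivative $\pscal{d(x)}{D\psi(x+td(x))}$ along a.e.\ ray, applying the disintegration formula \eqref{f:disint} turns the left-hand side into
\[
\sum_k \int_{A_k}\left(\int_{a(x)}^{b(x)} v(x+td(x))\,\alpha(x+td(x))\,\frac{d}{dt}\bigl[\psi(x+td(x))\bigr]\,dt\right) d\mathcal{H}^{n-1}(x),
\]
while the right-hand side becomes $\sum_k\int_{A_k}\bigl(\int_{a(x)}^{b(x)} f(x+td(x))\,\alpha(x+td(x))\,\psi(x+td(x))\,dt\bigr)\,d\mathcal{H}^{n-1}(x)$.

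Next, for a fixed cylinder $A_k$ I would let $\psi$ range over test functions that, restricted to the cylinder, are (approximately) of product form $\beta(x)\,\theta(t)$ with $\theta\in C_c^\infty(a(x),b(x))$ and $\beta$ supported near a point of $A_k$; a standard localization/density argument (covering a tubular neighbourhood of the ray by the cylinder, which is an open subset of $\Omega$ by (H5)) then yields that for $\mathcal{H}^{n-1}$-a.e.\ $x\in\bigcup_k A_k$ the scalar function $g_x(t) := v(x+td(x))\,\alpha(x+td(x))$ satisfies, in the sense of distributions on $(a(x),b(x))$,
\[
\frac{d}{dt} g_x(t) = -\,f(x+td(x))\,\alpha(x+td(x)).
\]
Since $f\ge 0$ is integrable along $\mathcal{H}^{n-1}$-a.e.\ ray (by Theorem~\ref{t:bian2}(iv)) and $\alpha$ is strictly positive and measurable, the right-hand side is in $L^1_{\mathrm{loc}}(a(x),b(x))$, so $g_x$ is locally absolutely continuous and \eqref{f:ode} holds pointwise a.e.; dividing by the strictly positive $\alpha(x+td(x))$ recovers local absolute continuity of $v$ itself along the ray. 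For the boundary condition \eqref{f:vfbc}: when $q(x)\in\Omega$, the ray endpoint is interior, so one can choose test functions $\psi$ supported in a full neighbourhood of $q(x)$ inside $\Omega$ and not vanishing at $q(x)$; integrating \eqref{f:ode} from $t$ to $b(x)$ shows that $\lim_{t\to b(x)^-} g_x(t)$ exists and equals a boundary flux that must vanish because no Dirac-type mass is permitted at an interior point (equivalently, because the integrated weak formulation produces no boundary term at $q(x)$). This forces $\lim_{t\to b(x)^-} g_x(t) = 0$, which together with \eqref{f:ode} gives the representation $g_x(t) = \int_t^{b(x)} f(x+sd(x))\,\alpha(x+sd(x))\,ds$, i.e.\ exactly the formula~\eqref{f:vf} after dividing by $\alpha(x)$.

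Finally I would verify that the function $\vf$ defined by \eqref{f:vf} is indeed in $L^1_+(\Omega)$ and solves \eqref{f:vpb2}. Non-negativity is immediate since $f\ge 0$ and $\alpha>0$. For integrability, apply the disintegration formula \eqref{f:disint} to $\vf$ and use Fubini--Tonelli on each cylinder: $\int_\Omega \vf\,dx = \sum_k\int_{A_k}\int_{a(x)}^{b(x)}\bigl(\int_t^{b(x)} f(x+sd(x))\,\alpha(x+sd(x))\,ds\bigr)\,dt\,d\mathcal{H}^{n-1}(x)$, and bounding the inner double integral by $(\diam\Omega)\int_{a(x)}^{b(x)} f(x+sd(x))\,\alpha(x+sd(x))\,ds$ and re-applying \eqref{f:disint} shows $\|\vf\|_{L^1(\Omega)} \le C\|f\|_{L^1(\Omega)}$. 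To check that \eqref{f:vpb2} holds, one reverses the computation of the first paragraph: plug the explicit $g_x$ into the disintegrated left-hand side, integrate by parts in $t$ on each ray (the boundary terms at $a(x)$ and $b(x)$ vanish because $\psi\in C_c^\infty(\Omega)$ and $g_x(b(x)^-)=0$ when $q(x)\in\Omega$, while if $q(x)\in\partial\Omega$ the test function vanishes there anyway), and obtain precisely $\int_\Omega f\psi\,dx$.

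\textbf{Main obstacle.} The delicate point is the rigorous passage from the global weak identity \eqref{f:vpb2} to the ray-by-ray ODE \eqref{f:ode}: it requires justifying that test functions can be localized to individual cylinders and disintegrated into a product structure in $(t,x)$, exploiting measurability of the sections $A_k$ and the fact that cylinders are open in $\Omega$, and it requires handling the a.e.\ identification $D\gauge(D\uo) = d$ on the set where $\uo$ fails to be differentiable only on a Lebesgue-null set. This is exactly the content of \cite[Sect.~7]{Bi}, on which we rely.
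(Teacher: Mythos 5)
Your plan is sound and matches what actually happens here: the paper gives no proof of this theorem at all, simply recalling it from \cite[Sect.~7]{Bi}, and your sketch (identification $D\gauge(D\uo)=d$ a.e., disintegration via \eqref{f:disint}, localization to rays, explicit integration of the ODE, and the $L^1$ bound by $\diam(\Omega)\,\|f\|_{L^1}$) is exactly the argument carried out there and echoed in Remark~\ref{r:vf}, Theorem~\ref{t:biane} and the Stability Theorem. One small caveat: the statement assumes only (H1)--(H4), so you should not invoke (H5) when justifying the localization step — the cylinders $T_k$ of Theorem~\ref{t:bian2} are merely measurable, not open, which is precisely why that passage is the delicate point you correctly defer to \cite{Bi}.
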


\begin{rk}\label{r:vf}
The fact that $\vf$ satisfies (\ref{f:ode}) along almost every ray
can be easily proved observing that, for every $x\in\Omega'$,
\[
\vf(x + t d(x)) =
\int_t^{b(x)} f(x+s d(x))\, \frac{\alpha(x+s d(x))}{\alpha(x+t d(x))}\, ds,
\]
so that
\[
(\vf\,\alpha)(x + t d(x)) =
\int_t^{b(x)} (f\,\alpha)(x+s d(x))\, ds.
\]
{}From this last equality we also see that $\vf$ satisfies the terminal
condition (\ref{f:vfbc}) for
every $x\in\Omega'$,
and not only for those points $x\in\Omega$ satisfying $q(x)\in\Omega$.
\end{rk}

\begin{rk}
\label{r:sptv}
{}From the definition \eqref{f:vf} of $\vf$ and the fact that
$\alpha(\cdot, x)$ is strictly positive in $(a(x), b(x))$,
we deduce that the essential support of $\vf$ coincides with
the closure of the set
\[
\left\{\bigcup \oray{p(x), x}:\ x\in\essspt f\right\}.
\]
In particular $\vf=0$ along all the transport rays starting from the points
of $\gaph \setminus \gaf$.
\end{rk}

\begin{rk}
Notice that neither (H5) nor (H6) are needed in order to find 
a solution to \eqref{f:vpb2}.
Namely, the weak formulation \eqref{f:vpb2} corresponds to the
continuity equation of the optimal mass transport problem in which
the mass can freely flow away from $\Omega$
at the first time it touches the boundary.
Hence, the phenomena depicted in Examples~\ref{r:es1} and
\ref{r:es2} 
cannot happen.
\end{rk}

Since all functions $\psi\in C^{\infty}_c(\Omega)$ (extended to $0$ in $\R^n\setminus \Omega$) are admissible in the
weak formulation (\ref{f:vpb}) of the transport equation, it is clear
that every solution $v$ to (\ref{f:vpb}) satisfies (\ref{f:ode})
along almost every ray.

Using the disintegration formula for the Lebesgue measure it is
not difficult to prove that the function $\vf$ defined in
\eqref{f:vf} above is a solution to \eqref{f:vfsol},
provided that the mass can flow away at the initial points
of the transport rays, i.e.
\[
\left\{p(x):\
x\in\bigcup_k A_k,\
\oray{p(x),q(x)}\cap\spt(f) \neq \emptyset
\right\}
\subseteq \gaf.
\]
This condition is clearly satisfied if (H5) holds.

\begin{teor}\label{t:biane}
Assume that $(H1)$-$(H5)$ hold.
Then $(\uo, \vf)$ is a solution to \eqref{f:vfsol2}.
\end{teor}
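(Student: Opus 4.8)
The plan is to test the continuity equation along the transport rays and reduce the weak identity to a one–dimensional integration by parts on each ray. First, by Proposition~\ref{p:D} the function $\uo$ is differentiable $\Leb^n$–a.e.\ in $\Omega$ and at such points $D\gauge(D\uo(x)) = d(x)$; since moreover $\uo\in\Xf$ and $\gauge(D\uo)=1$ a.e., it suffices to establish the weak identity~\eqref{f:vfsol}, i.e.
\[ \int_\Omega \vf\,\pscal{d}{D\psi}\,dx = \int_\Omega f\,\psi\,dx \qquad \forall\,\psi\in\test . \]
As $\vf\in L^1_+(\Omega)$, $|d|$ is bounded and $D\psi$ is bounded with compact support, the integrand $h:=\vf\,\pscal{d}{D\psi}$ belongs to $L^1(\Omega)$, so I would apply the disintegration formula~\eqref{f:disint} to $h$ and to $f\psi$. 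Along each ray $t\mapsto x+t\,d(x)$, $x\in A_k$, the direction $d(x)$ is constant, hence $\pscal{d(x)}{D\psi(x+td(x))} = \frac{d}{dt}\psi(x+td(x))$; writing $g_x(t):=(\vf\alpha)(x+td(x))$, the left-hand side becomes $\sum_k\int_{A_k}\left(\int_{a(x)}^{b(x)} g_x(t)\,\frac{d}{dt}\psi(x+td(x))\,dt\right)d\mathcal{H}^{n-1}(x)$.

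The next step is to integrate by parts in $t$ on $\mathcal{H}^{n-1}$–a.e.\ ray. By Theorem~\ref{f:bian3} and Remark~\ref{r:vf}, for a.e.\ such $x$ the function $g_x$ is absolutely continuous on $(a(x),b(x))$, equals $\int_t^{b(x)}(f\alpha)(x+sd(x))\,ds$, satisfies $g_x(b(x)^-)=0$ by the terminal condition~\eqref{f:vfbc} (which holds at every final point $q(x)$, interior or on $\partial\Omega$), and has $g_x(a(x)^+)=\int_{a(x)}^{b(x)}(f\alpha)(x+sd(x))\,ds=:L_x$. Integration by parts then yields
\[ \int_{a(x)}^{b(x)} g_x(t)\,\frac{d}{dt}\psi(x+td(x))\,dt = -\,L_x\,\psi(p(x)) + \int_{a(x)}^{b(x)} (f\alpha)(x+td(x))\,\psi(x+td(x))\,dt . \]
Summing over $k$ and integrating over the sets $A_k$, the last integral reassembles, via~\eqref{f:disint}, into $\int_\Omega f\psi\,dx$, so the identity will follow once the spurious boundary contribution $\sum_k\int_{A_k} L_x\,\psi(p(x))\,d\mathcal{H}^{n-1}(x)$ is shown to vanish.

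This boundary term is where assumption (H5) is used, and I expect it to be the main obstacle. Under (H5) every geodesic through an interior point is a maximal segment, so for $x\in A_k\subset\Omega\setminus D$ one has $p(x)\in\gaph$. Fix such an $x$: if $\psi(p(x))=0$ there is nothing to prove; if $\psi(p(x))\neq 0$, then, since $\psi\in\test$ vanishes in a neighbourhood of $\gafc$, we get $p(x)\notin\gafc$, hence $p(x)\notin\gaf$. By the definition~\eqref{f:gafg} of $\gaf$ together with~\eqref{f:lin} and (H5) this forces $\oray{p(x),q(x)}\cap\essspt f = \emptyset$: a point $z$ of that intersection would, by~\eqref{f:lin}, have $\cray{p(x),z}$ as a maximal geodesic, so $p(x)\in\gaf$, a contradiction. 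Consequently $f=0$ $\Leb^n$–a.e.\ along the ray, whence $g_x\equiv 0$, $L_x=0$, and the inner integrals on both sides vanish. Collecting the two cases, the boundary term is zero, and the theorem is proved. The remaining points — that $h\in L^1(\Omega)$ so that~\eqref{f:disint} applies, and that the integration by parts is legitimate along a.e.\ ray — are routine and are furnished directly by Theorem~\ref{f:bian3} and Remark~\ref{r:vf}.
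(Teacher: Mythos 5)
Your argument is correct and follows essentially the same route as the paper's proof: reduce to the weak identity for test functions in $\test$, disintegrate along transport rays via \eqref{f:disint}, integrate by parts on each ray using the ODE \eqref{f:ode} and the terminal condition from Remark~\ref{r:vf}, and kill the initial-point boundary term by splitting into the cases $p(x)\in\gafc$ (where $\psi$ vanishes) and $p(x)\notin\gafc$ (where, by the definition of $\gaf$ together with (H5) and \eqref{f:lin}, $f$ vanishes along the ray so the contribution $L_x=(\vf\alpha)(p(x))$ is zero). No gaps.
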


\begin{proof}
{}From Theorem~\ref{f:bian3}, the function $\vf$ satisfies
the ODE \eqref{f:ode} along almost every ray
$x+t d(x)$, $t\in (a(x), b(x))$.
Moreover, as already observed in Remark~\ref{r:vf},
\begin{equation}
\label{f:exibd}
(\vf\alpha)(q(x)) := \lim_{t\to b(x)^-} (\vf\alpha)(x+td(x)) = 0,
\qquad
\mathcal{H}^{n-1}\text{-a.e.}\ x\in \bigcup_k A_k.
\end{equation}
It is not restrictive to assume
that \eqref{f:ode} and \eqref{f:exibd} hold for every $x$ belonging to the set $\Omega'$
defined in \eqref{f:omegap}.

Since, by Proposition~\ref{p:D},
$\Omega'\subseteq \Omega\setminus J \subseteq \Omega\setminus D$,
we have that, for every $x\in\Omega'$,
$d(x) = D\gauge(D\uo(x))$ and,
for $\psi\in\test$,
\[
\pscal{D\gauge(D\uo))(x+td(x))}{D\psi(x+td(x))}=
\pscal{d(x)}{D\psi(x+td(x))}=
\psi'(x+td(x))\,,
\]
where the prime denotes differentiation w.r.t.\ $t$.
Hence the disintegration formula 
\eqref{f:disint} gives
\begin{equation}\label{f:disxe}
\begin{split}
& \int_\Omega \vf \pscal{D\gauge(D\uo)}{D\psi}\, dx  \\
& = \sum_k \int_{A_k}\int_{a(x)}^{b(x)}
(\vf \alpha)(x+td(x))\psi'(x+td(x))\,dt
\, d \mathcal{H}^{n-1}(x)\,.
\end{split}
\end{equation}
An integration by parts in the inner integral of \eqref{f:disxe}
and formula \eqref{f:ode}
lead to
\begin{equation}
\label{f:disxf}
\begin{split}
& \int_{a(x)}^{b(x)}  (\vf\alpha)(x+td(x))\psi'(x+td(x))\,dt \\ & =
(\vf\alpha\psi)(q(x)) - (\vf\alpha\psi)(p(x)) +
\int_{a(x)}^{b(x)} (f\alpha)(x+td(x))\psi(x+td(x))\,dt\,,
\end{split}
\end{equation}
where we use the convention
\[
(\vf\alpha)(p(x)) := \lim_{t\to a(x)+} (\vf\alpha)(x+t d(x)).
\]
If $p(x)$ belongs to $\gafc$ then
the test function $\psi$ vanishes in a neighbourhood of $p(x)$.
On the other hand, if $p(x)\not\in\gafc$, then
$f$ vanishes on the ray $\oray{p(x),q(x)}$, so that,
by \eqref{f:ode},
$\vf\alpha$ is constant along that ray.
Since $(\vf\alpha)(q(x)) = 0$, then also
$(\vf\alpha)(p(x)) = 0$. 

Finally, using again the disintegration formula,
the weak formulation \eqref{f:vpb}, \eqref{f:exibd}, 
and the boundary conditions 
$(\vf\alpha\psi)(p(x)) = (\vf\alpha)(q(x)) = 0$,
we conclude that $\vf$ is a solution to \eqref{f:vpb},
i.e.\ the pair $(\uo, \vf)$ is a solution to \eqref{f:MK}.
\end{proof}


In order to prove a uniqueness result we need
to exploit assumption (H6).

\begin{teor}\label{f:bian}
Assume that $(H1)$-$(H6)$ hold.
Then a function $v\in L^1_+(\Omega)$ satisfies \eqref{f:vpb}
if and only if,
for a.e.\ $x\in\Omega\setminus D$, the function $v$
is locally absolutely continuous along the ray
$t\mapsto x+t d(x)$, $t\in (a(x),b(x))$ and
\begin{equation}\label{f:odev}
\begin{cases}
\displaystyle
\frac{d}{dt}\left[
v(x+t d(x))\, \alpha(x+t d(x))\right]
= - f(x+t d(x))\, \alpha(x+t d(x)),\\
\displaystyle
\lim_{t\to b(x)^-} v(x+t d(x))\, \alpha(x+t d(x)) = 0.
\end{cases}
\end{equation}
As a consequence, the function $\vf$ defined in \eqref{f:vf}
is the unique solution of~\eqref{f:vpb}.
\end{teor}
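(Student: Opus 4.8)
The plan is to prove the two implications separately and then deduce uniqueness from a linear-structure argument. The ``only if'' direction is essentially contained in Theorem~\ref{f:bian3}: any $v\in L^1_+(\Omega)$ satisfying the weaker equation~\eqref{f:vpb2} (hence, a fortiori, any $v$ satisfying~\eqref{f:vpb}, since $C^\infty_c(\Omega)\subset\test$ after extension by zero) is locally absolutely continuous along a.e.\ ray and satisfies the ODE in~\eqref{f:ode}. What is missing from Theorem~\ref{f:bian3} is the terminal condition $\lim_{t\to b(x)^-}(v\alpha)(x+td(x))=0$ at final points $q(x)$ lying on $\partial\Omega$; this is exactly the point where the full class of test functions $\test$, together with (H5) and (H6), must be used.

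First I would isolate, among the rays of the decomposition $\{A_k\}$ of Theorem~\ref{t:bian2}, those whose final point $q(x)=x+b(x)d(x)$ belongs to $\partial\Omega$. For such a ray, $q(x)\in J\cap\partial\Omega$ by definition of $J$. By assumption (H6), $\overline{J\cap\partial\Omega}$ is disjoint from $\gaphc$, hence also from $\gafc\subseteq\gaphc$; since $\overline{J\cap\partial\Omega}$ is closed, every such $q(x)$ has a neighbourhood $U$ in $\R^n$ with $U\cap\gafc=\emptyset$. Therefore test functions $\psi\in\test$ are \emph{not} required to vanish near $q(x)$ — one can choose $\psi$ supported in such a $U$ and not vanishing at $q(x)$. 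Running the disintegration formula~\eqref{f:disint} exactly as in the proof of Theorem~\ref{t:biane}, integrating by parts along each ray and using the ODE, the interior integrals produce the boundary terms $(v\alpha\psi)(q(x))-(v\alpha\psi)(p(x))$; for rays with $p(x)\notin\gafc$ the quantity $v\alpha$ is constant (since $f\equiv0$ there by~\eqref{f:ode}) and for rays with $p(x)\in\gafc$ the test function vanishes at $p(x)$, so in all cases the contribution at initial points is controlled. Comparing with~\eqref{f:vpb} forces the sum over the remaining boundary terms $(v\alpha\psi)(q(x))$ to vanish for every admissible $\psi$; localizing $\psi$ near a single such $q(x)$ (legitimate precisely because of (H6)) yields $(v\alpha)(q(x))=0$, i.e.\ the terminal condition in~\eqref{f:odev}. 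For the rays whose final point is interior, the terminal condition already holds by Theorem~\ref{f:bian3}, eq.~\eqref{f:vfbc}. This establishes ``only if''; the ``if'' direction is the reverse computation — given $v$ satisfying~\eqref{f:odev}, the same integration by parts along rays plus the disintegration formula shows directly that $v$ satisfies~\eqref{f:vpb}, the boundary terms at $q(x)$ vanishing by the terminal condition and those at $p(x)$ vanishing as above (using (H5) to guarantee $p(x)\in\gafc$ whenever $f\not\equiv0$ on the ray).

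For the final uniqueness claim: if $v_1,v_2\in L^1_+(\Omega)$ both solve~\eqref{f:vpb}, then along a.e.\ ray both satisfy the same linear ODE~\eqref{f:ode} with the \emph{same} terminal condition at $t=b(x)$, so $w:=v_1-v_2$ satisfies $\frac{d}{dt}[(w\alpha)(x+td(x))]=0$ with $(w\alpha)(x+td(x))\to0$ as $t\to b(x)^-$; hence $w\alpha\equiv0$ along the ray, and since $\alpha>0$ on $(a(x),b(x))$ we get $w=0$ a.e.\ on the ray. By Theorem~\ref{t:bian2}(iii) the rays cover $\Omega$ up to a Lebesgue-null set, so $v_1=v_2$ a.e.\ in $\Omega$. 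Since $\vf$ from~\eqref{f:vf} solves~\eqref{f:vfsol2} (Theorem~\ref{f:bian3}) and in fact~\eqref{f:vpb} (Theorem~\ref{t:biane}), it is this unique solution.

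\textbf{Main obstacle.} The delicate step is the localization argument that extracts $(v\alpha)(q(x))=0$ at each final point $q(x)\in J\cap\partial\Omega$: one must justify that a \emph{single} boundary term can be isolated from the sum $\sum_k\int_{A_k}(v\alpha\psi)(q(x))\,d\mathcal H^{n-1}$ by a suitable choice of $\psi\in\test$, which requires (H6) to separate $J\cap\partial\Omega$ from $\gaphc$ and requires some care because a single point $q$ may be the final endpoint of a positive-$\mathcal H^{n-1}$-measure set of rays (cf.\ Example~\ref{r:es3}). Handling this cleanly — perhaps by testing against $\psi$ concentrated near $q$ and passing to the limit, exploiting that $v\alpha\geq0$ so there is no cancellation among the boundary terms — is where the real work lies; once it is done, the rest is bookkeeping with the disintegration formula already set up in the proof of Theorem~\ref{t:biane}.
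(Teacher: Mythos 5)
Your argument is essentially the paper's: interior regularity and the ODE come from Theorem~\ref{f:bian3}, the terminal condition at final points lying on $\partial\Omega$ comes from the disintegration/integration-by-parts identity of Theorem~\ref{t:biane} tested against functions that (H6) allows to be positive near $J\cap\partial\Omega$, and non-negativity of $v\alpha$ rules out cancellation; the uniqueness step is identical. One step, however, does not work as written: for a ray with initial point $p(x)\in\gaph\setminus\gafc$ and final point $q(x)\in\partial\Omega$, the constancy of $v\alpha$ along the ray does \emph{not} ``control'' the boundary term $-(v\alpha\psi)(p(x))$ --- the constant value is precisely the unknown quantity $(v\alpha)(q(x))$, so if $\psi(p(x))>0$ this term can exactly cancel the term $(v\alpha\psi)(q(x))$ you are trying to isolate. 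The fix is the one the paper uses: (H6) separates $\overline{J\cap\partial\Omega}$ from all of $\gaphc$, not merely from $\gafc$, so the test function can be chosen to vanish on a neighbourhood of \emph{every} initial point, whether it lies in $\gafc$ or not; your construction only imposes $U\cap\gafc=\emptyset$, which is insufficient. The paper also sidesteps the ``localize near a single $q$'' difficulty you flag as the main obstacle: it takes one global $\psi\in C^{\infty}_c(\R^n)$ with $\psi>0$ on the whole set $\mathcal{B}$ of boundary final points and $\psi=0$ off a $\delta$-neighbourhood of $\mathcal{B}$ chosen disjoint from $\gaf$ and from the initial points outside $\gafc$; then the identity reduces to a single sum of integrals of the non-negative quantity $(v\alpha\psi)(q(x))$ over the rays with boundary final points, which forces $(v\alpha)(q(x))=0$ for $\mathcal{H}^{n-1}$-a.e.\ such ray in one stroke, with no pointwise limiting argument needed.
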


\begin{proof}
By Theorem~\ref{t:bian2}(iii)
it is clear that two functions both satisfying
(\ref{f:odev}) along almost every ray
must coincide almost everywhere.
Hence the uniqueness result will be achieved once we prove that
every solution to \eqref{f:vpb} satisfies (\ref{f:odev}).

Let $v\in L^1_+(\Omega)$ be any solution to \eqref{f:vpb}, and
let $\psi\in\test$.
Notice that $v$ is a solution to \eqref{f:vpb2};
by Theorem~\ref{f:bian3}, it is not restrictive to assume
that \eqref{f:ode} holds for every $x$ belonging to the set $\Omega'$
defined in \eqref{f:omegap},
whereas \eqref{f:vfbc} holds for every $x\in\Omega'$ such that
$q(x)\in\Omega$.

Reasoning as in the proof of Theorem~\ref{t:biane}
(see \eqref{f:disxe} and \eqref{f:disxf} with $v$ instead of $\vf$),
and using the fact that $v$ is a solution to \eqref{f:vpb},
we obtain
\[
\sum_k \int_{A_k} [(v\alpha\psi)(q(x))-(v\alpha\psi)(p(x))]
\,d \mathcal{H}^{n-1}(x) = 0\,.
\]
By \eqref{f:vfbc}, if $q(x)$ belongs to $\Omega$,
then $(v\alpha)(q(x)) = 0$.
Moreover, if $p(x)$ belongs to $\gafc$ then
the test function $\psi$ vanishes in a neighbourhood of $p(x)$.
Hence the formula above is equivalent to
\begin{equation}\label{f:nullbca}
\sum_k \int_{B_k} (v\alpha\psi)(q(x))\,d \mathcal{H}^{n-1}(x) 
- \sum_k \int_{C_k} (v\alpha\psi)(p(x))\,d \mathcal{H}^{n-1}(x) 
= 0\,,
\end{equation}
where
\[
B_k := \{x\in A_k:\  q(x)\in\partial\Omega\}\,,\qquad
C_k := \{x\in A_k:\  p(x)\not\in\gafc\}\,.
\]
Let us consider the following subsets of $\partial\Omega$:
\[
\mathcal{B} := \left\{ q(x):\ x\in \bigcup_k B_k\right\}  
\subseteq J \cap \partial\Omega,
\qquad
\mathcal{C} := \left\{ p(x):\ x\in \bigcup_k C_k\right\}
\subseteq \gaph\setminus\gafc\,.
\]
{}From 
the very definition of $\mathcal{C}$, (H6) and
the inclusion $\gaf\subseteq\gaph$
we deduce that
\begin{equation}
\label{f:sep}
\overline{\mathcal{B}} \cap \gafc = \emptyset, \qquad
\overline{\mathcal{B}}\cap \overline{\mathcal{C}} = \emptyset.
\end{equation}
Hence, choosing $\delta > 0$ such that
$B_{\delta}(\mathcal{B}) \cap \mathcal{C} = B_{\delta}(\mathcal{B}) \cap \gaf = \emptyset$, 
by \eqref{f:sep} we can construct a function
$\psi\in C^{\infty}_c(\R^n)$ 
satisfying
\[
\psi > 0\quad\text{on}\ \mathcal{B}, \qquad
\psi = 0\quad\text{on}\ [\R^n\setminus B_{\delta}(\mathcal{B})]
\supset \gaf \cup \mathcal{C}\,,
\]
so that $\psi\in\test$.
Recalling that $v\alpha\geq 0$,
and using $\psi$ as a test function in \eqref{f:nullbca}
we conclude that
$(v\alpha)(q(x)) = 0$ for $\mathcal{H}^{n-1}$-a.e.\ $x\in B_k$, $k\in\N$,
so that the boundary condition in \eqref{f:odev} holds.
\end{proof}

Another easy consequence of both the representation formula
for $\vf$ and the disintegration \eqref{f:disint} is the following stability
result.

\begin{teor}[Stability]
For every $f_1, f_2\in L^1_+(\Omega)$ there holds
\[
\|v_{f_1} - v_{f_2}\|_{L^1(\Omega)} \leq
\diam(\Omega)\, \|f_1 - f_2\|_{L^1(\Omega)}\,,
\]
where $\diam(\Omega)$ denotes the diameter of the
set $\Omega$.
\end{teor}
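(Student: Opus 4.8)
The plan is to combine the explicit representation \eqref{f:vf} of the transport density with the disintegration formula \eqref{f:disint}, so that the whole estimate reduces to a one-dimensional weighted Fubini argument along the transport rays. First, observe that the vector field $d$, the endpoints $a(\cdot),b(\cdot)$ and the weight $\alpha$ depend only on $\Omega$ and $\phi$ (through $\uo$), and not on the source; hence formula \eqref{f:vf} defines a \emph{linear} map $g\mapsto v_g$ on $L^1(\Omega)$, and setting $g:=f_1-f_2$ one has $v_{f_1}-v_{f_2}=v_g$ a.e.\ in $\Omega$, with $|v_g|\le v_{|g|}\in L^1_+(\Omega)$ by Theorem~\ref{f:bian3} applied to $|g|=|f_1-f_2|$. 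It therefore suffices to prove $\|v_g\|_{L^1(\Omega)}\le\diam(\Omega)\,\|g\|_{L^1(\Omega)}$.

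Next I would work with $w_g:=v_g\,\alpha$. By the computation in Remark~\ref{r:vf} (which is purely algebraic and extends verbatim to signed $g$ by linearity of \eqref{f:vf}), for every $x\in\Omega'$ one has
\[
w_g(x+td(x))=\int_t^{b(x)}(g\,\alpha)(x+sd(x))\,ds,\qquad t\in(a(x),b(x)),
\]
whence $|w_g(x+td(x))|\le\int_{a(x)}^{b(x)}|g|\,\alpha\,(x+sd(x))\,ds$ for every such $t$, the right-hand side being constant along the ray. Integrating in $t$ over $(a(x),b(x))$ and interchanging the order of integration gives
\[
\int_{a(x)}^{b(x)}|w_g(x+td(x))|\,dt\le(b(x)-a(x))\int_{a(x)}^{b(x)}|g|\,\alpha\,(x+sd(x))\,ds .
\]
Since $\pgauge(d(x))=1$, the length of the transport ray through $x$ equals $b(x)-a(x)=\pgauge(q(x)-p(x))\le\diam(\Omega)$ (the diameter of $\overline\Omega$ measured with respect to $\pgauge$), because $p(x),q(x)\in\overline\Omega$.

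Finally I would invoke the disintegration formula \eqref{f:disint} twice: once to write $\int_\Omega|v_g|\,dx=\sum_k\int_{A_k}\int_{a(x)}^{b(x)}|w_g(x+td(x))|\,dt\,d\mathcal{H}^{n-1}(x)$, and once more to identify $\sum_k\int_{A_k}\int_{a(x)}^{b(x)}|g|\,\alpha\,(x+sd(x))\,ds\,d\mathcal{H}^{n-1}(x)$ with $\int_\Omega|g|\,dx=\|f_1-f_2\|_{L^1(\Omega)}$. Plugging the ray estimate of the previous paragraph into the first identity yields $\|v_{f_1}-v_{f_2}\|_{L^1(\Omega)}=\|v_g\|_{L^1(\Omega)}\le\diam(\Omega)\,\|f_1-f_2\|_{L^1(\Omega)}$, which is the assertion. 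There is no real obstacle here: the only points deserving a line of justification are the measurability of the inner integrands (guaranteed by Theorem~\ref{t:bian2}(iv)) and the identification of $b(x)-a(x)$ with the $\pgauge$-length of the transport ray; the estimate itself is just Fubini's theorem performed on each cylinder of rays, exactly as in Remark~\ref{r:haus}.
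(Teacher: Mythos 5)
Your proposal is correct and follows essentially the same route as the paper: disintegrate $\|v_{f_1}-v_{f_2}\|_{L^1}$ along the transport rays, use the representation of $(v\alpha)$ from Remark~\ref{r:vf} applied to $f_1-f_2$ by linearity, bound the resulting double integral on each ray by $(b(x)-a(x))\leq\diam(\Omega)$ times a single integral, and disintegrate back to recover $\|f_1-f_2\|_{L^1}$. Your remark that $b(x)-a(x)=\pgauge(q(x)-p(x))$, so that $\diam(\Omega)$ should be understood with respect to $\pgauge$ in the anisotropic case, is a useful precision that the paper's proof leaves implicit.
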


\begin{proof}
{}From the disintegration formula \eqref{f:disint}, the representation formula \eqref{f:vf}
and Remark~\ref{r:vf}
we have that
\[
\begin{split}
\|v_{f_1} - v_{f_2}\|_{L^1(\Omega)} &=
\sum_k \int_{A_k}\left(\int_{a(x)}^{b(x)}
|(v_{f_1} - v_{f_2})(x+t d(x))|\,\alpha(x+t d(x))\,dt \right)
\, d \mathcal{H}^{n-1}(x)
\\ & \leq
\sum_k \int_{A_k}\left[\int_{a(x)}^{b(x)}
\left(\int_t^{b(x)}(|f_1 - f_2|\,\alpha)(x+s d(x))ds\right)
\,dt \right]
\, d \mathcal{H}^{n-1}(x)
\\ & \leq
\diam(\Omega)\sum_k \int_{A_k}
\left(\int_{a(x)}^{b(x)}(|f_1 - f_2|\,\alpha)(x+s d(x))ds \right)
\, d \mathcal{H}^{n-1}(x)
\\ & =
\diam(\Omega)\, \|f_1-f_2\|_{L^1(\Omega)}\,,
\end{split}
\]
completing the proof.
\end{proof}

\section{Uniqueness of the solutions}
\label{s:uni}

Up to now we have proved that there exists a unique $v=\vf$ such that the pair
$(\uo,\vf)$ solves (\ref{f:MK}). In this section we shall prove that, actually,
$\vf$ is the unique admissible $v$--component for (\ref{f:MK}).

For what concerns the $u$ component, we start by proving that the Monge--Kantorovich system (\ref{f:MK})
is the Euler-Lagrange condition for the minimum problem
with gradient constraint
\begin{equation}\label{f:min}
\min \left\{-\int_{\Omega}fu\, dx:\ u\in \Xf\right\}.
\end{equation}
It is clear that, since $f$ is non-negative and $\uo$ is the maximal
element in $\Xf$, then $\uo$ is a solution to (\ref{f:min}). Moreover
every $u$ which minimizes (\ref{f:min}) has to agree with $\uo$ on $\essspt f$.

\begin{teor}\label{t:equiv}
Assume that $(H1)$--$(H5)$ hold.
The minimum problem \eqref{f:min} and the system of PDEs \eqref{f:MK}
are equivalent in the following sense.
\begin{itemize}
\item[(i)]
$u\in\Xf$ is a solution to \eqref{f:min} if and only if
there exists $v \in L^1_+(\Omega)$
such that
$(u,v)$ is a solution to~\eqref{f:MK}.
\item[(ii)]
Let $u\in\Xf$ be a solution to~\eqref{f:min}.
Then $(u,v)$ is a solution to~\eqref{f:MK}
if and only if $(\uo, v)$ is a solution to~\eqref{f:MK}.
\end{itemize}
\end{teor}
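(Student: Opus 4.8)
The plan is to prove the two equivalences by systematically exploiting the structure already established: the weak formulation in Definition~\ref{d:sol}(iii), the constraint (ii), and the characterization of $\uo$ as the maximal element of $\Xf$. For part (i), the forward implication is the substantive one: given a solution $u$ to \eqref{f:min}, I must produce an admissible transport density $v$. The key observation is that any minimizer $u$ must coincide with $\uo$ on $\essspt f$ (since $u\le \uo$ and $f\ge 0$, strict inequality on a set of positive measure meeting $\essspt f$ would strictly decrease the functional). I would then take $v = \vf$, the density constructed in Theorem~\ref{t:biane}, and check that the triple $(u,\vf)$ still satisfies Definition~\ref{d:sol}. Condition (ii) of that definition holds because $\gauge(D\uo)=1$ a.e.\ and, by Remark~\ref{r:sptv}, $\essspt \vf$ is contained in the union of transport rays through $\essspt f$, where $u = \uo$; hence $\gauge(Du) = 1$ $\vf$-a.e. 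For condition (iii), since the weak formulation involves $D\gauge(Du)$ weighted by $v=\vf$, and on $\essspt\vf$ we have $Du = D\uo$ (using that $u=\uo$ on a neighbourhood of these rays, or more carefully that $Du = D\uo$ a.e.\ on $\essspt\vf$ since both have $\gauge$-gradient $1$ along the same ray directions), the integral identity for $(u,\vf)$ reduces to the one already proved for $(\uo,\vf)$ in Theorem~\ref{t:biane}.

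The reverse implication in (i) follows from Theorem~\ref{t:equiv}(ii) combined with the variational inequality: if $(u,v)$ solves \eqref{f:MK}, I want to show $u$ minimizes \eqref{f:min}. Taking $\psi = \uo - u \ge 0$ (which vanishes on $\gaf$ and is smooth enough after the usual approximation argument, or one approximates by $\test$ functions) as a test function and using $v\ge 0$, $\gauge(Du)\le 1$, together with the identity $\pscal{D\gauge(Du)}{D\uo - Du} \le \gauge(D\uo) - \gauge(Du) \le 0$ (a consequence of Theorem~\ref{t:sch}(iii), the subgradient inequality for the convex function $\gauge$), I would derive $\int_\Omega f(\uo - u)\,dx \le \int_\Omega v\pscal{D\gauge(Du)}{D\uo - Du}\,dx \le 0$. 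Since $\uo\ge u$ and $f\ge 0$, this forces $\int_\Omega fu\,dx = \int_\Omega f\uo\,dx$, so $u$ attains the same (optimal) value as $\uo$, hence solves \eqref{f:min}.

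For part (ii): one direction is trivial since $\uo$ is itself a solution to \eqref{f:min}. For the other, suppose $u$ solves \eqref{f:min} and $(u,v)$ solves \eqref{f:MK}. By the argument above, $u = \uo$ on $\essspt f$, and (from the variational inequality being an equality) $\pscal{D\gauge(Du)}{D\uo - Du} = 0$ $v$-a.e., which together with the strict inequality in Theorem~\ref{t:sch}(iii) forces $D\gauge(Du) = D\gauge(D\uo)$ $v$-a.e.\ and $\gauge(Du) = 1$ $v$-a.e. Then $v\pscal{D\gauge(Du)}{D\psi} = v\pscal{D\gauge(D\uo)}{D\psi}$ pointwise a.e., so the weak formulation for $(u,v)$ is literally the same identity as for $(\uo,v)$; hence $(\uo,v)$ solves \eqref{f:MK} as well. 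Conversely, if $(\uo,v)$ is a solution, then by Theorem~\ref{t:eu1} (or rather Theorem~\ref{f:bian}) $v = \vf$, and one checks as in part (i) that $(u,\vf)$ is admissible, using $u=\uo$ on the support of $\vf$.

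The main obstacle I anticipate is the careful handling of $Du$ versus $D\uo$ on $\essspt v$: a priori $u$ and $\uo$ agree only on $\essspt f$, not on the whole cylinder of transport rays, so one must argue that $v$-almost every point lies on a ray through $\essspt f$ (from the structure of $\vf$, i.e.\ Remark~\ref{r:sptv} and Theorem~\ref{f:bian}) and that along such a ray $u$, being $1$-Lipschitz with respect to $\pgauge$ and agreeing with $\uo$ at the interior endpoint in $\essspt f$, must grow with maximal slope in the ray direction back toward the initial point, forcing $Du = D\uo$ a.e.\ on that ray. This uses \eqref{f:lin}, Lemma~\ref{l:diseqrho}, and the strict convexity of $K^0$ (Theorem~\ref{t:sch}(ii)) to pin down the gradient direction uniquely. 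The approximation needed to use $\uo - u$ (merely Lipschitz, vanishing on $\gaf$) as a test function in place of $\test$ functions is routine but should be mentioned.
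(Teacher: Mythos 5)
Your overall strategy is the same as the paper's: both directions hinge on testing the weak formulation with $u-\uo$ and exploiting the strict inequality $\pscal{p}{\xi}<\gauge(\xi)$ for $p\in K^0$, $p\neq D\gauge(\xi)$ (Theorem~\ref{t:sch}(iii)) to force $v\,D\gauge(Du)=v\,D\gauge(D\uo)$ and $v(1-\gauge(Du))=0$ a.e. Two points need repair, one minor and one substantive. The minor one: in your chain for the reverse implication of (i) you write $\pscal{D\gauge(Du)}{D\uo-Du}\leq\gauge(D\uo)-\gauge(Du)\leq 0$; the second inequality is false pointwise, since $\gauge(D\uo)=1$ a.e.\ and $\gauge(Du)\leq 1$ give $\gauge(D\uo)-\gauge(Du)\geq 0$. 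What saves the argument is multiplying by $v$ and invoking the complementarity condition $(1-\gauge(Du))v=0$ of Definition~\ref{d:sol}(ii), which yields $v\bigl(\gauge(D\uo)-\gauge(Du)\bigr)=0$ a.e.; you should say this explicitly, since without it the displayed inequality does not hold.

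The substantive gap is in the direction of (ii) where $(\uo,v)$ is assumed to be a solution and you must show $(u,v)$ is one. You invoke Theorem~\ref{f:bian} to conclude $v=\vf$, but that theorem requires (H6), which is not among the hypotheses of the present statement (only (H1)--(H5) are assumed, and under these $v$ need not be unique, cf.\ Example~\ref{r:es3}). Moreover your fallback, ``check as in part (i) using $u=\uo$ on the support of $v$,'' fails for a general $v$: a minimizer $u$ is only forced to agree with $\uo$ on $\essspt f$, while $\essspt v$ may contain rays disjoint from $\essspt f$ carrying circulating mass. The correct argument, which is the paper's, avoids any identification of $v$ with $\vf$: test the equation for $(\uo,v)$ with $u-\uo$, use $f(u-\uo)=0$ to get $\int_\Omega v\bigl(1-\pscal{D\gauge(D\uo)}{Du}\bigr)\,dx=0$, note the integrand is nonnegative because $Du\in K$ and $D\gauge(D\uo)\in\partial K^0$, and conclude from Theorem~\ref{t:sch}(iii) that $D\gauge(Du)=D\gauge(D\uo)$ and $\gauge(Du)=1$ hold $v$-a.e.\ even where $u\neq\uo$. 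Your geometric argument in part (i) (propagating $u=\uo$ from $\essspt f$ along rays onto $\essspt\vf$ via the $1$-Lipschitz bound) is correct for $v=\vf$ and is a legitimate, if more laborious, alternative there; but it does not extend to the general $v$ needed in (ii).
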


\begin{proof}
The proof is similar to the one of Theorem~5.3 in
\cite{CMl}; for the reader's convenience we sketch here
the main steps.

In what follows we will freely use that fact that,
by a density argument, the difference $u-w$ of two functions
$u,w\in\Xf$ can be used as test function in the weak formulation
\eqref{f:vpb} of the transport equation.

Let us denote by $I_{K}$ the indicator function of the set $K$, that is
\[
I_K(x) :=
\begin{cases}
0 & \textrm{if} \ x\in K\,, \\
+\infty & \textrm{if} \ x\in \R^n \setminus K\,,
\end{cases}
\]
so that
\[
F(u):=\int_{\Omega}[I_K(Du)-fu]\, dx = -\int_{\Omega}fu\, dx,
\qquad \forall u\in \Xf\,.
\]
Since the gauge function $\gauge$ is differentiable in $\R^n \setminus \{0\}$,
the subgradient of $I_K$
can be explicitly computed, obtaining
\[
\partial I_{K}(\xi)=
\begin{cases}
\{\alpha D\gauge(\xi)\colon \alpha \geq 0\} & \text{if}\ \xi\in \partial K, \\
\emptyset & \text{if}\ \xi\not\in K, \\
\{0\} & \text{if}\ \xi\in \inte K
\end{cases}
\]
(see e.g.\ \cite[Sect.~23]{Rock}).

Hence, if $(u,v)$ is a solution to (\ref{f:MK}),
by condition (ii) in Definition~\ref{d:sol},
we have that $v(x) D\gauge(Du(x)) \in \partial I_{K}(Du(x))$ for a.e.\ $x\in\Omega$, so that,
for every $w\in \Xf$
\[
F(w) - F(u)\geq
\int_\Omega v \pscal{D\gauge(Du)}{Dw-Du}\, dx  -
\int_\Omega f\,(w-u)\, dx =0\,,
\]
where the last equality follows from
the fact that $w-u$ can be taken as test function
in~\eqref{f:vpb}.
This proves that $u$ is a solution to (\ref{f:min}).

Assume now that $u\in\Xf$ is a minimizer for $F$, so that $f(u-\uo)=0$
a.e.\ in $\Omega$, due to the maximality of $\uo$ in $\Xf$ and the fact that $f\geq 0$.
Again we can choose $u-\uo$ as test function in the transport equation \eqref{f:vpb} solved by $\uo$, getting
\[
0=\int_\Omega \vf \pscal{D\gauge(D\uo)}{Du-D\uo}\, dx =
-\int_\Omega \vf (1-\pscal{D\gauge(D\uo)}{Du})\, dx\,.
\]
On the other hand, by Theorem~\ref{t:sch}(ii)
and the fact that $Du\in K$ a.e.\ in $\Omega$, we have
\begin{gather*}
1-\pscal{D\gauge(D\uo(x))}{Du(x)} \geq 0,\ \text{a.e.}\ x\in \Omega\,, \\
1-\pscal{D\gauge(D\uo(x))}{Du(x)} = 0 \
\Longleftrightarrow\ D\gauge(D\uo(x))=D\gauge(Du(x)),
\end{gather*}
so that
$\vf D\gauge(D\uo)=\vf D\gauge(Du)$
and $\vf (1-\gauge(Du))=0$ a.e.\ in $\Omega$,
that is $(u, \vf)$ is a solution of (\ref{f:MK}).
This concludes the proof of (i).

Let us prove (ii).
The previous computation shows that if $(\uo,v)$ is a solution of \eqref{f:MK}, and $u\in\Xf$ is a solution of the minimum problem \eqref{f:min},
then also $(u,v)$ is a solution of \eqref{f:MK}.
Finally, let $(u,v)$ be a solution to \eqref{f:MK}.
Upon observing that $v\,\gauge(Du)=v$ a.e.\ in $\Omega$,
and choosing $u-\uo$
as test function in the weak formulation of the transport equation
\[
-\dive(v\, D\gauge(Du)) = f,
\qquad\textrm{in}\ \Omega\,,
\]
we conclude that $(\uo,v)$ is a solution to \eqref{f:MK}.
\end{proof}

As a consequence of the previous results we obtain the following uniqueness result for the $v$ component.

\begin{cor}
Assume that $(H1)$--$(H6)$ hold.
If $(u,v)$ is a solution to~\eqref{f:MK}, then $v=\vf$ in $\Omega$, and $u=\uo$ on $\essspt f$.
\end{cor}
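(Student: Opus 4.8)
The plan is to chain together the results already proved, so that the corollary needs essentially no new argument. Let $(u,v)$ be any solution to \eqref{f:MK}. The first step is to reduce to the maximal profile: by Theorem~\ref{t:equiv}(i) the function $u$ is a minimizer of \eqref{f:min}, and then Theorem~\ref{t:equiv}(ii) shows that the pair $(\uo,v)$ is again a solution to \eqref{f:MK}. Concretely, this is the last computation in the proof of Theorem~\ref{t:equiv}: one uses the identity $v\,\gauge(Du)=v$ a.e.\ in $\Omega$ (condition (ii) in Definition~\ref{d:sol}) together with Theorem~\ref{t:sch}(ii)--(iii) to rewrite the flux $v\,D\gauge(Du)$ as $v\,D\gauge(D\uo)$, and then uses $u-\uo$ as a test function in \eqref{f:vpb}, which is admissible by the density argument recalled there since $u=\uo=\phi$ on $\gaf$. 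In particular $v\in L^1_+(\Omega)$ solves the weak continuity equation \eqref{f:vpb} relative to the profile $\uo$, i.e.\ condition (iii) of Definition~\ref{d:sol} for the pair $(\uo,v)$.

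The second step is to apply the uniqueness statement of Theorem~\ref{f:bian}: under $(H1)$--$(H6)$ the function $\vf$ defined in \eqref{f:vf} is the \emph{unique} element of $L^1_+(\Omega)$ satisfying \eqref{f:vpb}. Hence $v=\vf$ in $\Omega$, which is the first assertion of the corollary. For the second assertion, since $u$ minimizes \eqref{f:min} and $\uo$ is the maximal element of $\Xf$, the equality $\int_\Omega fu\,dx=\int_\Omega f\uo\,dx$ together with $u\leq\uo$ and $f\geq 0$ forces $f\,(u-\uo)=0$ a.e.\ in $\Omega$. Thus $f=0$ a.e.\ on the open subset $\{u<\uo\}$ of $\Omega$, so this set is one of the open sets discarded in the definition of $\essspt f$, whence $\essspt f\subseteq\{u=\uo\}$; since $u$ and $\uo$ are continuous, $u=\uo$ on $\essspt f$.

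The main point requiring care — and the only place where a genuine argument is hidden — is the reduction $(u,v)\mapsto(\uo,v)$ of the first step, namely that $u-\uo$ is legitimately an admissible test function in \eqref{f:vpb} and that the flux may be rewritten using only $(1-\gauge(Du))v=0$. Both facts have, however, already been established inside the proof of Theorem~\ref{t:equiv}, and the deep input (the disintegration of the Lebesgue measure along transport rays, the positivity of $(\dive d)_s$, and the role of $(H6)$ in killing the boundary term at the final points of the rays) is entirely contained in Theorem~\ref{f:bian}. So I expect the corollary to follow with no further work.
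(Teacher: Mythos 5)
Your proposal is correct and follows exactly the paper's route: Theorem~\ref{t:equiv}(i) to pass from $(u,v)$ to the minimality of $u$, Theorem~\ref{t:equiv}(ii) to replace $u$ by $\uo$, and Theorem~\ref{f:bian} for the uniqueness $v=\vf$; the argument that minimizers of \eqref{f:min} agree with $\uo$ on $\essspt f$ is the same observation the paper records just before Theorem~\ref{t:equiv}. No gaps.
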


\begin{proof}
Let $(u,v)$ be a solution to~\eqref{f:MK}.
By Theorem~\ref{t:equiv}(i) we have that $u$ is a solution
to \eqref{f:min} hence,
by Theorem~\ref{t:equiv}(ii),
$(\uo, v)$ is a solution to~\eqref{f:MK}.
The conclusion now follows from Theorem~\ref{f:bian}.
\end{proof}

We now introduce another element of $\Xf$,
which will play the r\"ole of the minimal admissible profile.
This minimal profile $\uf$ depends on the source $f$ and
is defined by
\begin{equation}
\label{f:uf}
\uf(x) := \sup\{\uo(z) - L(\gamma):\
z\in\spt(f),\
\gamma\in\ccurve{x,z}\}\,,
\end{equation}
with the convention $\uf\equiv -\infty$ if $f\equiv 0$.
(We recall that, by definition, $\spt(f)$ is a relatively closed set in $\Omega$.)

\begin{prop}
\label{p:supp}
Assume that $(H1)$--$(H4)$ hold.
If $f\not\equiv 0$ then the function $\uf$ defined in
\eqref{f:uf} belongs to $\Xf$ and $\uf=\uo$ on $\essspt f$.
Moreover, every function $u\in\Xf$ such that
$u = \uf$ in $\essspt(f)$ satisfies $\uf\leq u\leq\uo$ in
$\overline{\Omega}$.
\end{prop}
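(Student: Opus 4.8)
The plan is to read off all three assertions from the observation that $\uf$ is (the Lipschitz extension to $\overline{\Omega}$ of) the \emph{smallest} function which is locally $1$-Lipschitz with respect to $\pgauge$ and lies above $\uo$ on $\spt(f)=\essspt f$, the properties of $\uo$ recalled above doing the rest. First I would check that $\uf$ is real-valued: the set $\spt(f)$ is non-empty because $f\not\equiv 0$, so choosing $z\in\spt(f)$ together with a minimizing curve in $\ccurve{x,z}$ gives $\uf(x)>-\infty$, while $\uf(x)\le\max_{\overline{\Omega}}\uo<+\infty$ since $L(\gamma)\ge 0$ and $\uo$ is continuous on the compact set $\overline{\Omega}$. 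Then, for $x_1,x_2\in\overline{\Omega}$ and $\sigma\in\ccurve{x_2,x_1}$, concatenating $\sigma$ with an arbitrary competitor $\gamma\in\ccurve{x_1,z}$ in the definition of $\uf(x_1)$ produces a curve in $\ccurve{x_2,z}$ of length $L(\sigma)+L(\gamma)$, whence $\uf(x_1)\le\uf(x_2)+L(\sigma)$; by Lemma~\ref{l:diseqrho} this means $\uf\in W^{1,\infty}(\Omega)$ with $D\uf\in K$ a.e.\ in $\Omega$.

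Next I would establish the comparisons with $\uo$. Testing the definition \eqref{f:uf} with the constant curve gives $\uf(x)\ge\uo(x)$ for $x\in\spt(f)$; conversely, for every $x\in\overline{\Omega}$, $z\in\spt(f)$ and $\gamma\in\ccurve{x,z}$ the definition of $\uo$ (concatenate an almost-optimal curve from $\partial\Omega$ to $x$ with $\gamma$) gives $\uo(z)\le\uo(x)+L(\gamma)$, so that $\uf\le\uo$ on $\overline{\Omega}$. Hence $\uf=\uo$ on $\spt(f)=\essspt f$, which is the second assertion, and in particular $\uf\le\uo\le\phi$ on $\partial\Omega$. To complete $\uf\in\Xf$ it then remains to prove $\uf=\phi$ on $\gaf$: if $y\in\gaf$, then by \eqref{f:gafg} there are $x\in\essspt f$ and a maximal geodesic $\gamma\in\ccurve{y,x}$ through $x$, so $\uo(x)=\phi(y)+L(\gamma)$ by Definition~\ref{d:maxg} (in particular $\phi(y)<+\infty$). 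Taking $z=x\in\spt(f)$ and $\sigma=\gamma$ as a competitor in \eqref{f:uf} gives $\uf(y)\ge\uo(x)-L(\gamma)=\phi(y)$, and since also $\uf(y)\le\uo(y)\le\phi(y)$ we obtain $\uf(y)=\phi(y)$. Thus $\uf\in\Xf$.

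For the minimality statement, let $u\in\Xf$ satisfy $u=\uf$ on $\essspt(f)$. Since $\Xf\subseteq\spaceX$ and $\uo$ is the maximal element of $\spaceX$, we have $u\le\uo$ on $\overline{\Omega}$. On the other hand $u=\uf=\uo$ on $\spt(f)$ by the second assertion, so for $x\in\overline{\Omega}$, $z\in\spt(f)$ and $\gamma\in\ccurve{x,z}$ the $1$-Lipschitz inequality for $u$ (Lemma~\ref{l:diseqrho}, applied to the Lipschitz extension of $u$ to $\overline{\Omega}$) yields $u(x)\ge u(z)-L(\gamma)=\uo(z)-L(\gamma)$; taking the supremum over $z$ and $\gamma$ gives $u(x)\ge\uf(x)$. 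Therefore $\uf\le u\le\uo$ in $\overline{\Omega}$, as claimed.

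There is no essential difficulty in this argument; the point requiring the most care is the consistent bookkeeping of curve orientations, since $\pgauge$ and hence $L$ is not symmetric, together with the passage from $\Omega$ to $\overline{\Omega}$ in Lemma~\ref{l:diseqrho} via the Lipschitz extensions and the existence of length-minimizing curves between points of $\overline{\Omega}$. The one genuinely structural observation is that the geodesic realizing $\uo(x)=\phi(y)+L(\gamma)$ for $y\in\gaf$ is itself an admissible competitor in the definition of $\uf(y)$, which is exactly what forces $\uf=\phi$ on $\gaf$.
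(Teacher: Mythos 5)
Your proof is correct and follows essentially the same route as the paper: the Lipschitz property of $\uf$ via concatenation of curves and Lemma~\ref{l:diseqrho}, the identity $\uf=\phi$ on $\gaf$ by using the maximal geodesic as a competitor in \eqref{f:uf}, and minimality by comparing $u$ with $\uo=\uf$ on $\spt(f)$ along curves. If anything, your write-up is slightly more careful than the paper's, since you explicitly verify $\uf=\uo$ on $\essspt f$ (which the paper uses without comment) and you avoid relying on the attainment of the supremum in \eqref{f:uf} by passing to the supremum at the end.
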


\begin{proof}
By the very definition of $\uf$, we have that $\uf$ satisfies
\eqref{f:lipur} in $\Omega$, hence, by Lemma~\ref{l:diseqrho}, 
$\uf\in W^{1,\infty}(\Omega)$
and $D\uf\in K$ a.e.\ in $\Omega$.
Moreover, for every  $z\in\spt(f)$ and $y\in\partial\Omega$
we have that
\[
\phi(y)\geq\uo(y)\geq\uo(z) - L(\gamma),
\qquad \forall \gamma\in\ccurve{y,z},
\]
so that $\phi(y)\geq\uf(y)$.
In order to prove that
$\uf = \phi$ on $\gaf$,
let $y\in\gaf$ and let $z\in\overline{\spt(f)}$ be such that
there exists a maximal geodesics $\gamma\in\ccurve{y,z}$,
i.e.\
$\uo(z) = \phi(y) + L(\gamma)$. Then
\[
\phi(y)\geq\uf(y)\geq\uo(z) - L(\gamma) = \phi(y).
\]
It remains to prove that, if $u\in\Xf$ coincides with $\uf$
on $\spt(f)$, then $u\geq\uf$.
(The inequality $u\leq\uo$ is trivially satisfied by the
maximality of $\uo$ in $\Xf$.)
Namely, for every $x\in\Omega$ there exist $z\in\overline{\spt(f)}$
and $\gamma\in\ccurve{x,z}$ such that
$\uf(x) = \uo(z) - L(\gamma)$, hence
by Lemma~\ref{l:diseqrho}(iii)
\[
\uf(x) + L(\gamma) = \uo(z) = u(z) \leq u(x) + L(\gamma)
\]
i.e.\ $\uf(x) \leq u(x)$.
\end{proof}

\begin{rk}
Since $\uf = \uo$ in $\essspt (f)$, exploiting
the explicit representation formula \eqref{f:vf} of $\vf$,
we can infer that $\uf = \uo$ in $\essspt(\vf)$
(see also Remark~\ref{r:sptv}).
\end{rk}

The following result is the analogous of
Theorem~7.2 in \cite{CMl}.

\begin{teor}[Uniqueness]\label{t:uniu}
Assume that $(H1)$--$(H6)$ hold.
Then a function $u\in\Xf$ is a solution to~\eqref{f:min}
if and only if $\uf\leq u\leq\uo$.
Moreover, the function $\uf$ coincides with $\uo$ in $\Omega$
(and hence $\uo$ is the unique solution to \eqref{f:min})
if and only if
$J \subseteq \overline{\spt(f)}$.
\end{teor}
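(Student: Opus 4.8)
We first prove (1). Since $f\ge 0$ and $\uo$ is the pointwise maximal element of $\Xf$, the function $\uo$ solves \eqref{f:min}, and for $u\in\Xf$ it is a minimizer if and only if $\int_\Omega f(\uo-u)\,dx=0$, i.e.\ — being $\uo-u\ge 0$ and $f\ge 0$ — if and only if $u=\uo$ a.e.\ on $\{f>0\}$, hence, by continuity, on $\essspt f$. By Proposition~\ref{p:supp} one has $\uf=\uo$ on $\essspt f$, so $u\in\Xf$ is a minimizer if and only if $u=\uf$ on $\essspt f$; and, again by Proposition~\ref{p:supp}, this is equivalent to $\uf\le u\le\uo$ on $\overline\Omega$. (If $f\equiv 0$ every $u\in\Xf$ is a minimizer and $\uf\equiv-\infty$, so the statement is trivial; from now on $f\not\equiv 0$.) In particular $\uo$ is the unique minimizer if and only if $\uf=\uo$ on $\overline\Omega$, which, by continuity of $\uf$ and $\uo$, is the same as $\uf\equiv\uo$ in $\Omega$; so it remains to show that $\uf\equiv\uo$ in $\Omega$ if and only if $\J\subseteq\overline{\essspt f}$.

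For the ``if'' part, assume $\J\subseteq\overline{\essspt f}$ and fix $x\in\Omega$; since $\uf\le\uo$ always, it suffices to prove $\uf(x)\ge\uo(x)$. Along the transport ray through $x$ the function $\uo$ grows linearly with maximal slope, so by \eqref{f:lin} we have $\uo(q(x))=\uo(x)+b(x)$, the segment $\cray{x,q(x)}$ has $\pgauge$-length $b(x)$, and it is a geodesic; hence any minimizing geodesic $\gamma_k\in\ccurve{x,z_k}$ satisfies $L(\gamma_k)\to b(x)$ whenever $z_k\to q(x)$. Since $q(x)\in\J\subseteq\overline{\essspt f}$ we may take $z_k\in\essspt f$ with $z_k\to q(x)$, and then \eqref{f:uf} together with the continuity of $\uo$ gives $\uf(x)\ge\limsup_k\bigl(\uo(z_k)-L(\gamma_k)\bigr)=\uo(q(x))-b(x)=\uo(x)$. (When $x\in D$ one has $q(x)=x$ and the same computation applies with $z_k\to x$.) Hence $\uf\equiv\uo$ in $\Omega$.

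For the ``only if'' part, assume $\uf\equiv\uo$ in $\Omega$, hence on $\overline\Omega$ by continuity, and fix $q_0\in\J$. By compactness of $\overline\Omega$ the supremum in \eqref{f:uf} at $x=q_0$ is attained: there are $z_0\in\overline{\essspt f}$ and a minimizing geodesic $\gamma\in\ccurve{q_0,z_0}$ with $\uo(z_0)-L(\gamma)=\uf(q_0)=\uo(q_0)$, and by Lemma~\ref{l:diseqrho}(iii) this forces $\uo$ to grow with maximal slope along the whole of $\gamma$. It suffices to show $\gamma$ is trivial, since $z_0=q_0$ yields $q_0\in\overline{\essspt f}$ and thus $\J\subseteq\overline{\essspt f}$. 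Suppose not. If $q_0\in\Omega$, concatenating a geodesic from some $p\in\proj(q_0)$ to $q_0$ with $\gamma$ produces a geodesic through the interior point $q_0$; recalling that under (H5) every geodesic through an interior point is a segment, its support is a segment, so $z_0-q_0$ is a positive multiple of $q_0-p$ for \emph{every} $p\in\proj(q_0)$, whence $\Delta(q_0)$ is a singleton, $q_0\notin D$, and the direction $d(q_0)$ is that of the ray through $q_0$. An initial sub-segment of $\gamma$ then shows $\uo(q_0+t\,d(q_0))=\uo(q_0)+t$ for all small $t>0$, i.e.\ $\uo$ keeps growing with maximal slope beyond $q_0$; but $q_0\in\J$ is the final point of a transport ray and, since $\{t\ge 0:\uo(x+t\,d(x))=\uo(x)+t\}=[0,b(x)]$ for $x\notin D$, this is impossible — a contradiction. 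If $q_0\in\partial\Omega$, a non-trivial $\gamma$ reaching a point of $\Omega$ would, by the same concatenation and (H5), force $q_0$ into an open segment contained in $\Omega$, which is absurd, unless $\uo(q_0)=\phi(q_0)$; in that case $\gamma$ itself exhibits $q_0\in\gaph$, contradicting assumption (H6), which gives $\overline{\J\cap\partial\Omega}\cap\gaphc=\emptyset$. Either way $\gamma$ is trivial, and the proof is complete.

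The main obstacle is the ``only if'' direction, precisely the rigidity statement that the final point of a transport ray admits no maximal-slope continuation: for interior $q_0$ this is squeezed out of (H5) through the fact that every geodesic through an interior point is a segment, combined with the analysis of the set of directions $\Delta(q_0)$ at points of $D$; the truly delicate case is $q_0\in\partial\Omega$, where one has to rule out maximal-slope curves leaving $q_0$ into $\Omega$, and this is exactly where (H6), which separates $\J\cap\partial\Omega$ from $\gaph$ (cf.\ Lemma~\ref{l:uniq}), is indispensable. Throughout one uses freely the existence of minimizing geodesics and the continuity of the geodesic distance of $(\R^n,\pgauge)$, both recalled in Section~\ref{s:prel}.
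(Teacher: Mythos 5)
Your characterization of the minimizers and the ``if'' direction of the second claim are essentially correct; the latter is a more roundabout version of the paper's argument, which simply notes that $\uf=\uo$ on $\overline{\spt(f)}\supseteq J$ and applies the $1$-Lipschitz estimate for $\uf$ along the segment $\cray{x,q(x)}$, with no need for the approximation $z_k\to q(x)$ or for the continuity of the intrinsic distance of $\overline\Omega$ (which, incidentally, is not recalled in Section~\ref{s:prel}). The ``only if'' direction is where you diverge from the paper, and where there is a genuine gap. The paper's key tool is the equality case in the triangle inequality for $\pgauge$ (strict convexity of $K^0$, Theorem~\ref{t:sch}(ii)): from $\uo(x_0)=\phi(y_0)+\pgauge(x_0-y_0)$, $\uf(x_0)=\uo(z)-L(\gamma)$ and $\uo(x_0)=\uf(x_0)$ one gets $\pgauge(x_0-y_0)+L(\gamma)=\pgauge(z-y_0)$, which forces $\gamma$ to be the straight segment $\cray{x_0,z}$ with $x_0\in\cray{y_0,z}$; since $\uo$ then grows with maximal slope strictly beyond $x_0$ in the direction of the transport ray, $x_0$ cannot be a final point. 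This computation works verbatim whether $x_0$ is interior or on $\partial\Omega$, and it never invokes (H6).

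Your case $q_0\in\partial\Omega$ does not close. The concatenation argument only bites when $\gamma$ enters $\Omega$; if $\gamma$ stays on $\partial\Omega$ (its endpoint $z_0$ may well lie in $\overline{\spt(f)}\cap\partial\Omega$), nothing you wrote excludes it. The fallback ``$\gamma$ itself exhibits $q_0\in\gaph$'' is not a valid deduction: membership in $\gaph$ requires a maximal geodesic from $q_0$ to a point of $\Omega$ realizing $\phi(q_0)$ plus the length, and you have established neither that any point of $\gamma$ lies in $\Omega$ nor that $\uo(q_0)=\phi(q_0)$ (on $\partial\Omega$ only $\uo\leq\phi$ is known). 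Moreover, had you shown $q_0\in\gaph$ you would already contradict Lemma~\ref{l:uniq}, which gives $J\cap\gaph=\emptyset$ from (H5) alone; so (H6), far from being ``indispensable'', plays no r\^ole in this theorem. A smaller blemish in the interior case: per Definition~\ref{d:maxg} a geodesic ``through $x$'' terminates at $x$, so the concatenated curve is a geodesic through $z_0$ (possibly a boundary point), not ``through the interior point $q_0$''; to invoke the segment property you must truncate $\gamma$ at an interior point near $q_0$. Both defects disappear if you replace the case analysis by the strict-convexity computation above.
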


\begin{proof}
Let us first consider the trivial case $f\equiv 0$.
In this case $\Xf = \spaceX$, $\uf \equiv -\infty$
and every function $u\in\spaceX$ is a solution to the minimum
problem~\eqref{f:min}.

Let now assume that $f\not\equiv 0$.
The first assertion follows from Proposition~\ref{p:supp}
and from the fact that $u\in\Xf$ is a solution
to \eqref{f:min} if and only if
$u=\uo$ on $\spt(f)$.

Let us prove the uniqueness result.

Let $J \subseteq \overline{\spt(f)}$.
{}From Proposition~\ref{p:supp} we have that
$\uf = \uo$ on $\overline{\spt(f)}$, hence on $J$.
Let $x\in \Omega \setminus J$ be given, and let $q(x)\in J$
be the endpoint of the ray through $x$. We have
\[
\uo(x)=\uo(q(x))-\pgauge(q(x)-x) =\uf(q(x))-\pgauge(q(x)-x)\leq \uf(x)\leq  \uo(x)\,,
\]
and hence $\uf(x)=\uo(x)$.

Assume now that $\uo = \uf$ in $\Omega$,
and assume, by contradiction, that there exists a point
$x_0\in J$, $x_0\not\in \overline{\spt(f)}$.
By definition, there exist $z\in \overline{\spt(f)}$
and a curve $\gamma\in\ccurve{x_0,z}$ such that
\begin{equation}
\label{f:uo1}
\uf(x_0) = \uo(z) - \len{\gamma}.
\end{equation}
Moreover, by \eqref{f:LH2}, if $y_0\in\proj(x_0)$
one has
\begin{equation}
\label{f:uo2}
\uo(x_0) = \uo(y_0) + \pgauge(x_0-y_0).
\end{equation}
Since, by assumption, $\uo(x_0) = \uf(x_0)$,
\eqref{f:uo1} and \eqref{f:uo2} yield
\[
\pgauge(z-y_0) \leq \pgauge(x_0-y_0) + \len{\gamma}
= \uo(z) - \uo(y_0) \leq \pgauge(z-y_0),
\]
i.e., $\pgauge(x_0-y_0) + \len{\gamma} = \pgauge(z-y_0)$.
{}From Theorem~\ref{t:sch}(ii) it follows that
$\cray{y_0,x_0}\cup\gamma = \cray{y_0,z}$, that is
$\len{\gamma} = \pgauge(z-x_0)$ and
$x_0\in \cray{y_0, z}$.
Finally, since $z\neq x_0$, by \eqref{f:uo1} and the 
definition of final point
we have that $x_0\not\in J$, a contradiction.
\end{proof}

For the reader's convenience, we summarize here the results
we have obtained.
\begin{cor}
Assume that $(H1)$--$(H6)$ hold. Then
\begin{itemize}
\item[(i)] If $(u,v)$ is a solution to \eqref{f:MK}, then $v=\vf$.
\item[(ii)] $(u,\vf)$ is a solution to \eqref{f:MK} if and only if $u\in\spaceX$ and $\uf\leq u\leq \uo$ in $\Omega$.
\item[(iii)] $(\uo,\vf)$ is the unique solution to \eqref{f:MK} if and only if $J\subseteq \overline{\essspt f}$.
\end{itemize}
\end{cor}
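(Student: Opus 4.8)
The plan is to derive this summarizing corollary by assembling the three uniqueness results already proved, so that essentially no new argument is needed. I would state at the outset that parts (i)--(iii) are immediate consequences, respectively, of the Corollary following Theorem~\ref{t:equiv}, of Proposition~\ref{p:supp} together with Theorem~\ref{t:equiv} and Theorem~\ref{t:uniu}, and of Theorem~\ref{t:uniu}.

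For (i): if $(u,v)$ solves \eqref{f:MK}, then by Theorem~\ref{t:equiv}(i) the function $u$ solves the minimum problem \eqref{f:min}, hence by Theorem~\ref{t:equiv}(ii) the pair $(\uo,v)$ also solves \eqref{f:MK}; in particular $v$ satisfies the weak formulation \eqref{f:vpb}, and Theorem~\ref{f:bian} forces $v=\vf$. For (ii): one direction is the content of Theorem~\ref{t:uniu} combined with Theorem~\ref{t:equiv}(i)--(ii) --- if $u\in\spaceX$ with $\uf\leq u\leq\uo$, then $u=\uo$ on $\essspt f$ (since $\uf=\uo$ there by Proposition~\ref{p:supp}), so $u$ solves \eqref{f:min}, so $u\in\Xf$ and by Theorem~\ref{t:equiv}(ii) the pair $(u,\vf)$ solves \eqref{f:MK} because $(\uo,\vf)$ does by Theorem~\ref{t:eu1}; conversely, if $(u,\vf)$ solves \eqref{f:MK}, then $u$ solves \eqref{f:min} by Theorem~\ref{t:equiv}(i), and Theorem~\ref{t:uniu} gives $\uf\leq u\leq\uo$. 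For (iii): by (i) the $v$-component of any solution is $\vf$, so \eqref{f:MK} has a unique solution if and only if $\uo$ is the only admissible $u$-component; by (ii) this happens precisely when $\uf=\uo$ in $\Omega$, which by the second part of Theorem~\ref{t:uniu} is equivalent to $J\subseteq\overline{\spt f}$. One should only note that $\overline{\spt f} = \overline{\essspt f}$ (the two notions of support agree up to closure for an $L^1$ function, and here $\spt f$ is already taken as the essential support), so the stated condition $J\subseteq\overline{\essspt f}$ matches.

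I do not expect any genuine obstacle: the corollary is purely a repackaging. The only point requiring a line of care is the equivalence of the two support notations appearing across the statements (the paper writes both $\spt(f)$ and $\essspt f$), and making sure that in (ii) membership in $\spaceX$ plus $\uf\leq u\leq\uo$ does entail membership in $\Xf$, i.e.\ that $u=\phi$ on $\gaf$ --- this follows because $u\geq\uf=\phi$ on $\gaf$ (Proposition~\ref{p:supp}) and $u\leq\uo\leq\phi$ there, forcing equality.

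\begin{proof}
All three statements follow by combining the results already established.

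(i) Let $(u,v)$ be a solution to \eqref{f:MK}. By Theorem~\ref{t:equiv}(i), $u$ is a solution to \eqref{f:min}, so by Theorem~\ref{t:equiv}(ii) the pair $(\uo,v)$ is a solution to \eqref{f:MK}; in particular $v\in L^1_+(\Omega)$ satisfies \eqref{f:vpb}. By Theorem~\ref{f:bian}, $v=\vf$.

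(ii) Assume first that $u\in\spaceX$ with $\uf\leq u\leq\uo$ in $\Omega$. By Proposition~\ref{p:supp}, $\uf=\uo$ on $\essspt f$, hence $u=\uo$ on $\essspt f$. On $\gaf$ we then have $\phi = \uf \leq u \leq \uo \leq \phi$, so $u=\phi$ on $\gaf$ and thus $u\in\Xf$; moreover $u$ is a solution to \eqref{f:min}, since $u=\uo$ on $\essspt f$. By Theorem~\ref{t:eu1} the pair $(\uo,\vf)$ solves \eqref{f:MK}, so by Theorem~\ref{t:equiv}(ii) the pair $(u,\vf)$ solves \eqref{f:MK} as well. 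Conversely, if $(u,\vf)$ is a solution to \eqref{f:MK}, then by Theorem~\ref{t:equiv}(i) $u\in\Xf\subseteq\spaceX$ is a solution to \eqref{f:min}, and by Theorem~\ref{t:uniu} we obtain $\uf\leq u\leq\uo$ in $\Omega$.

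(iii) By (i), every solution to \eqref{f:MK} has $v$-component equal to $\vf$; hence \eqref{f:MK} admits a unique solution if and only if $\uo$ is the only admissible $u$-component. By (ii), this holds if and only if $\uf\leq u\leq\uo$ forces $u=\uo$, that is, if and only if $\uf=\uo$ in $\Omega$. By Theorem~\ref{t:uniu}, the latter is equivalent to $J\subseteq\overline{\spt(f)}=\overline{\essspt f}$.
\end{proof}
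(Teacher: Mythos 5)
Your proof is correct and follows essentially the same route as the paper, which states this corollary without proof as a summary of Theorem~\ref{t:equiv}, Theorem~\ref{f:bian}, Proposition~\ref{p:supp} and Theorem~\ref{t:uniu}; your assembly of those results (including the check that $u\in\spaceX$ with $\uf\leq u\leq\uo$ forces $u=\phi$ on $\gaf$, hence $u\in\Xf$) is exactly the intended argument.
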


\section{The isotropic case: application to sandpiles} \label{s:sand}

In this section we shall focus our attention on the isotropic case $\gauge(\xi)=|\xi|$, $\xi\in \R^n$, ($n=2$ in the model problem) translating the results of the previous sections in terms of description of the
equilibrium configurations for sandpiles on a flat table with a vertical rim, and comparing these results with the ones known in literature.

In what follows we always assume that (H1), (H3), (H4), (H5) and (H6) hold. 
Furthermore, when considering the sandpile model, 
one could prefer not to allow profiles with negative height.
If this is the case,
one has to assume that 
\begin{itemize}
\item[(a)] The boundary datum $\phi$ (corresponding to the height of the rim) is non-negative;
\item[(b)] The space $\Xf$ contains only non-negative functions;
\item[(c)] The minimal function $\uf$, defined in \eqref{f:uf},
is replaced by $\max\{\uf,\, 0\}$.
\end{itemize}

If the sand is poured by a vertical source $f$ on the table occupying the region $\Omega$, the admissible equilibrium configurations can be depicted in terms of the pair $(u,v)$, the profile of the standing layer and
the thickness of the rolling layer, which are solutions of the PDEs system
(\ref{f:MKintro}), that is

\begin{itemize}
\item[($\text{i}'$)] $(u,v)\in \Xf \times L^1_+(\Omega)$;
\item[($\text{ii}'$)] $(1-|Du|)v=0$ a.e.\ in $\Omega$;
\item[($\text{iii}'$)] for every 
$\psi\in\test$
\[
\int_\Omega v \pscal{Du}{D\psi}\, dx = \int_\Omega f \psi \, dx\,,
\]
\end{itemize}
where
\[
\gaf=\{y\in \partial\Omega\colon\ \exists x \in \essspt f
\ \text{s.t.}\ \oray{y,x}\subset\Omega\
\text{and}\ \uo(x)=\phi(y)+|x-y|
\}
\]
is defined in term of the Lax-Hopf function
\[
\uo(x)=\min\{ \phi(y)+|x-y|,\ y\in\partial\Omega
\}.
\]
Following \eqref{f:LH2}, we have that
\[
\uo(x)=\min\{\phi(y)+|x-y|,\ y\in\gaph\},
\qquad x\in\Omega,
\]
where
\[
\gaph=\{y\in \partial\Omega\colon\ \exists x \in\Omega\
\ \text{s.t.}\ \oray{y,x}\subset\Omega\
\text{and}\ \uo(x)=\phi(y)+|x-y|\}.
\]
Hence in the sandpiles model, $\gaph$ is the set of the initial points 
of the transport rays 
where the matter is allowed to run down, that is where the $v$--component could be non-zero.

On the other hand, $\gaf$ is the set of the initial points of the
transport rays along which the transport is active. It
is the effective border of the container, where the standing layer fills the gap with the rim of height $\phi$ (since $u=\uf=\uo=\phi$ on $\Gamma_f$ for every admissible profile, see Theorem \ref{t:uniu}) and the exceeding sand falls down.

Finally, $\partial\Omega \setminus \gaph$ is the part of the boundary closed by walls that the sand cannot overcome,
no matter what the source is.

The results obtained in the previous sections, interpreted in the
light of the sandpile model, are the following.
\begin{teor}
Assume that $(H1)$, $(H3)$--$(H6)$ hold. Then:
\begin{itemize}
\item[(i)] 
The thickness of the rolling layer is uniquely determined by
the data of the problem.
\item[(ii)] 
The admissible profiles of the standing layer are wedged between
the minimal profile $\uf$ and the maximal profile $\uo$.
In particular, every admissible profile has to agree with the
maximal one in the region where the transport is active.
\item[(iii)] 
There exists a unique admissible configuration if and only if
the source pours sand on the whole ridge of the maximal profile.
\end{itemize}
\end{teor}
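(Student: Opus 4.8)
The plan is to derive the final theorem as a direct translation of the results already established in Sections~\ref{s:exi} and~\ref{s:uni} into the isotropic language of the sandpile model, so the proof is essentially a bookkeeping argument with no new analytic content. Throughout one takes $\gauge(\xi)=|\xi|$, so that $D\gauge(Du)=Du/|Du|$ on $\{Du\neq 0\}$ and, on the set where $|Du|=1$ (which is where $v$ may be nonzero by condition ($\text{ii}'$)), $D\gauge(Du)=Du$; hence the weak formulation ($\text{iii}'$) with test functions $\psi\in\test$ coincides exactly with Definition~\ref{d:sol}(iii), and a pair $(u,v)$ satisfying ($\text{i}'$)--($\text{iii}'$) is precisely a solution to \eqref{f:MK} in the sense of Definition~\ref{d:sol}.

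For part (i), I would simply invoke the Corollary following Theorem~\ref{t:uniu} (part (i)): under (H1)--(H6), if $(u,v)$ solves \eqref{f:MK} then $v=\vf$, where $\vf$ is the explicit function \eqref{f:vf} constructed from the data $f$, $\Omega$, $\phi$ via the disintegration of Theorem~\ref{t:bian2}. Since $\vf$ depends only on the data, the thickness of the rolling layer is uniquely determined; existence of at least one admissible configuration is guaranteed by Theorem~\ref{t:eu1}, which provides the solution $(\uo,\vf)$. For part (ii), I would quote part (ii) of the same Corollary together with Theorem~\ref{t:uniu}: the admissible $u$-components are exactly those $u\in\spaceX$ with $\uf\le u\le\uo$ in $\Omega$, and by Proposition~\ref{p:supp} one has $\uf=\uo$ on $\essspt f$, i.e.\ every admissible profile agrees with the maximal profile on the set where the source — hence the transport — is active (recall Remark~\ref{r:sptv}, which identifies $\essspt\vf$ with the closure of the union of rays $\oray{p(x),x}$ with $x\in\essspt f$). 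If one wishes to enforce the sign constraints (a)--(c), one replaces $\uf$ by $\max\{\uf,0\}$ throughout; since $\uo\ge 0$ whenever $\phi\ge 0$, this modification does not affect the statement.

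For part (iii), I would again appeal to Theorem~\ref{t:uniu} (equivalently part (iii) of the Corollary): there is a unique solution to \eqref{f:MK} — namely $(\uo,\vf)$ — if and only if $J\subseteq\overline{\spt(f)}$, where $J$ is the set of final points of the transport rays. In the isotropic sandpile interpretation, $J$ is precisely (the closure of) the ridge of the maximal profile $\uo=d_\Omega^\phi$, i.e.\ the set where $\uo$ fails to be differentiable together with the endpoints of the rays emanating from it, so "the source pours sand on the whole ridge of the maximal profile" is exactly the condition $J\subseteq\overline{\essspt f}$. This yields the stated equivalence.

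The only point requiring a word of care — and the step I would single out as the main subtlety rather than an obstacle — is verifying that the isotropic weak formulation ($\text{iii}'$) over test functions in $\test$ really is equivalent to Definition~\ref{d:sol}(iii), which needs the observation $v\,\gauge(Du)=v$ a.e.\ (from ($\text{ii}'$)) so that $v\,D\gauge(Du)=v\,Du$ on the support of $v$; once this is in place, every cited theorem applies verbatim and the proof concludes by simply restating their consequences.

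\begin{proof}
In the isotropic case $\gauge(\xi)=|\xi|$ one has $D\gauge(\xi)=\xi/|\xi|$ for $\xi\neq 0$, so that on the set $\{|Du|=1\}$ there holds $D\gauge(Du)=Du$. By condition ($\text{ii}'$), $v=0$ a.e.\ on $\{|Du|<1\}$, hence $v\, D\gauge(Du)=v\, Du$ a.e.\ in $\Omega$. Consequently a pair $(u,v)\in\Xf\times L^1_+(\Omega)$ satisfies ($\text{i}'$)--($\text{iii}'$) if and only if it is a solution to \eqref{f:MK} in the sense of Definition~\ref{d:sol}. The three assertions are now translations of results already proved.

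(i) By Theorem~\ref{t:eu1} there exists $\vf\in L^1_+(\Omega)$ with $(\uo,\vf)$ a solution to \eqref{f:MK}, so an admissible configuration exists. By the Corollary following Theorem~\ref{t:uniu}, part (i), every solution $(u,v)$ to \eqref{f:MK} has $v=\vf$, and $\vf$ is given by the explicit formula \eqref{f:vf}, which depends only on $\Omega$, $\phi$ and $f$. Hence the thickness of the rolling layer is uniquely determined by the data.

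(ii) By the Corollary following Theorem~\ref{t:uniu}, part (ii), $(u,\vf)$ is a solution to \eqref{f:MK} if and only if $u\in\spaceX$ and $\uf\leq u\leq\uo$ in $\Omega$; combined with (i), these are exactly the admissible profiles. By Proposition~\ref{p:supp}, $\uf=\uo$ on $\essspt f$, and by Remark~\ref{r:sptv} the essential support of $\vf$ is the closure of $\bigcup\{\oray{p(x),x}:x\in\essspt f\}$, i.e.\ the region where the transport is active; thus every admissible profile agrees with $\uo$ there. (If the sign constraints (a)--(c) are imposed, $\uf$ is replaced by $\max\{\uf,0\}$; since $\phi\geq 0$ forces $\uo\geq 0$, the statement is unchanged.)

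(iii) By Theorem~\ref{t:uniu} (equivalently, the Corollary following it, part (iii)), the system \eqref{f:MK} admits a unique solution, namely $(\uo,\vf)$, if and only if $J\subseteq\overline{\spt(f)}$. In the isotropic sandpile model $J$ is the set of final points of the transport rays of $\uo$, that is the ridge of the maximal profile, so this is precisely the condition that the source pours sand on the whole ridge of $\uo$.
\end{proof}
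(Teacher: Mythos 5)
Your proposal is correct and matches the paper's approach exactly: the paper offers no separate proof of this theorem, presenting it as a direct restatement of Theorem~\ref{t:eu1}, Theorem~\ref{t:uniu} and the Corollary following it in the isotropic setting, which is precisely the bookkeeping you carry out (including the observation that $v\,D\gauge(Du)=v\,Du$ a.e.\ by the complementarity condition, and the identification of $J$ with the ridge of $\uo$).
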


These results extend the known results for the open table
problem without walls (see \cite{CaCa,CCCG,CCS,Pr})
and with walls (see \cite{CFV}).
Moreover, we have clarified the issue of the lack of
uniqueness of the rolling layer we have faced
in a less general setting in \cite{CMl}.

\section{Examples}
\label{s:examples}

In this section we detail some computation
that were skipped while presenting
Examples~\ref{r:es1}, \ref{r:es2} and \ref{r:es3}
in Section~\ref{s:problem}.

In all examples the metric is isotropic,
i.e.\ $\gauge(\xi) = |\xi|$. 
Moreover, we assumed that $f\equiv 1$,
so that the set $\gaf$ (defined in \eqref{f:gafg})
of initial points of maximal
geodesics intersecting the support of $f$
coincides with the set $\gaph$ (defined in \eqref{f:gaphg}) 
of all initial points of maximal geodesics. 
In the first two examples only assumptions (H1)--(H4) hold,
so that maximal geodesics need not be segments.
In any case, by Proposition~\ref{p:supp} we have that
$\uf = \uo$, so that the Lax--Hopf function $\uo$ is the
unique candidate for the $u$-component of the solution.

Concerning the $v$-component,
by Theorem~\ref{f:bian3} we have that,
along almost every transport ray, the ODE~\eqref{f:ode}
is satisfied. 
Moreover, if the final point $q$ of a transport ray does not
belong to the closure of the set of initial points,
then $v\alpha$ must vanish on $q$
(see the end of the proof of Theorem~\ref{f:bian}).

\subsection*{Details on Example~\ref{r:es1}}

We have already observed that
every solution $(u,v)$ of \eqref{f:MK}
must satisfy $u=\uo$ in $\Omega$, whereas
$v(x) = 1-x_2$ for every $x = (x_1, x_2)\in \Omega_2$,
so that, for every test function
$\psi\in C^{\infty}_c(\R^2 \setminus \gafc)$,
\[
\int_{\Omega_2}\left( v\pscal{D\uo}{D\psi} - \psi\right) dx = 0.
\]
On the other hand, $v$ must be locally absolutely continuous along
any ray $(0,1)\ni r\mapsto (r\cos\theta, r\sin\theta)$,
$0<\theta<\pi/2$, and
\[
\frac{d}{dr} \left[r\, v(r\cos\theta, r\sin\theta)\right]
= r,
\qquad -r\in (0,1),\ 0<\theta<\frac{\pi}{2}
\]
(see \eqref{f:ode}).
Then, on $\Omega_1$, we have that
\[
v(r\cos\theta, r\sin\theta)
= \frac{1-r^2}{2r} + \frac{c(\theta)}{r}
\qquad r\in (0,1),\ 0<\theta<\frac{\pi}{2}
\]
for some integrable function $c\geq 0$.
Now, an explicit computation in polar coordinates gives
\[
\int_{\Omega_1}\left( v\pscal{D\uo}{D\psi} - \psi\right) dx
= \int_0^{\pi/2}
\left[c(\theta) \psi(\cos\theta,\sin\theta)
- \left(\frac{1}{2} + c(\theta)\right) \psi(0)
\right]\, d\theta
\]
and this last integral clearly cannot vanish for every choice
of the test function $\psi$.

\subsection*{Details on Example~\ref{r:es2}}

The Lax-Hopf function can be easily computed:
\[
\uo(x) =
\begin{cases}
1 + x_2, &\text{if}\ x\in\Omega_1,\\
1 + |x|, &\text{if}\ x\in\Omega_2,\\
1 - |x|, &\text{if}\ x\in\Omega_3.
\end{cases}
\]
It is clear that 
$\gaph = \gaf = (S_1\cup S_3)\setminus\{(-1,-1),(2^{-1/2}, - 2^{-1/2})\}$.

We claim that every pair $(\uo, \vf + w)$,
with
\[
\vf(x) :=
\begin{cases}
1+x_2, &\text{if}\ x\in\Omega_1,\\
(1-|x|^2)/(2|x|), &\text{if}\ x\in\Omega_2,\\
|x| / 2, &\text{if}\ x\in\Omega_3,
\end{cases}
\]
and
\begin{equation}
\label{f:om3}
w(r\cos\theta, r\sin\theta)
:= 
\begin{cases}
c(\theta)/{r} & \text{on}\ \Omega_3,\\
0 & \text{on}\ \Omega\setminus\Omega_3, 
\end{cases}
\end{equation}
with 
\[
c\in L^1_+(-\pi/2, -\pi/4),\qquad
\int_{-\pi/2}^{-\pi/4} c(\theta)\, d\theta = \frac{\pi}{8}\,,
\]
is a solution to \eqref{f:MK} in the sense of
Definition~\ref{d:sol}.
Notice that, as usual, the function $\vf$ is obtained
solving the ODE \eqref{f:ode},
while $w$ is a solution to
$-\dive (w D\uo) = 0$ in $\Omega$, with
$\int_{\Omega_3} w = \Leb^2(\Omega_2)$.
Namely, given $\psi\in C^{\infty}_c(\R^2 \setminus \gafc)$ and denoting by
\[
I_j := \int_{\Omega_j}\left[(\vf+w)\pscal{D\uo}{D\psi} - \psi\right] dx,
\qquad j=1,2,3,
\]
passing in polar coordinates we easily get
\[
I_1 = 0,\qquad
I_2 = -\frac{\pi}{8}\, \psi(0), \qquad
I_3 = \psi(0) \int_{-\pi/2}^{-\pi/4} c(\theta)\, d\theta = \frac{\pi}{8}\, \psi(0),
\]
hence $I_1+I_2+I_3 = 0$ and the claim is proved.

\subsection*{Details on Example~\ref{r:es3}}

The boundary datum corresponding to the geometry of
transport rays depicted in Figure~\ref{fig:nonuniq3}
is the continuous function
$\phi\colon\partial \Omega \to \R$
satisfying
\[
\phi(x)=
\begin{cases}
0 & x\in S_1:=\left((-1,0]\times \{-1,1\}\right )\cup \{(\cos \theta, \sin\theta): -\frac{\pi}{2}<\theta<-\frac{\pi}{4}\}  \\
1 & x\in S_2:=\left((0,1)\times\{0\}\right)\cup \left(\{1\}\times(0,1)\right) \\
1-\sqrt{x_1^2+x_2^2} & x\in S_3:=\{(x_1, -x_1), \ x_1\in (0, \frac{1}{\sqrt{2}})\}\\
x_1 &  x\in S_4:=(0,1)\times\{1\} \\
1-|x_2| & x\in S_5:=\{-1\}\times [-1,1]\,.
\end{cases}
\] 
The Lax--Hopf
function can be computed as follows:
\[
\uo(x)=
\begin{cases}
1-|x_2| & x \in \Omega_1 \\
1-\sqrt{x_1^2+x_2^2} & x \in \Omega_3 \\
1+x_2 & x\in \Omega_2':=\{x_1\in (0,1),\ 0<x_2<\frac{x_1^2}{4}\} \\
2-x_1 & x \in \Omega_2'':=\{x_2\in (0,1),\ 1-\frac{(1-x_2^2)}{4}<x_1<1\}\\
\sqrt{x_1^2+(1-x_2)^2} 
& x\in\Omega_2\setminus(\Omega_2'\cup\Omega_2'')\,.
\end{cases}
\]
Hence $\gaph=\gaf=S_1\cup S_2$.
Since (H1)--(H5) are satisfied,
by Theorem~\ref{t:biane} the pair
$(\uo, \vf)$ is a solution to \eqref{f:MK}.

On the other hand $(0,0)\in \gafc $ is the endpoint of all the transport rays covering $\Omega_3$, so that (H6) is not satisfied.
Hence, if $w\colon\Omega\to\R$ is a function 
of the form \eqref{f:om3},
with
$c\in L^1_+(-\pi/2,\, -\pi/4)$, 
then for every test function $\psi\in\test$ one has
\[
\int_{\Omega}w\pscal{D\uo}{D\psi} dx
= \int_{\Omega_3}w\pscal{D\uo}{D\psi} dx = 0,
\]
so that the pair $(\uo, \vf + w)$ is a solution
to \eqref{f:MK}.


\begin{thebibliography}{10}

\bibitem{AmTi}
{L.} Ambrosio and {P.} Tilli, Topics on analysis in metric spaces, Oxford
  Lecture Series in Mathematics and its Applications, vol.~25, Oxford
  University Press, Oxford, 2004. \MR{2039660 (2004k:28001)}

\bibitem{AEW}
{G.} Aronsson, {L.\ C.} Evans, and {Y.} Wu, \emph{Fast/slow diffusion and
  growing sandpiles}, J.\ Differential Equations \textbf{131} (1996), no.~2,
  304--335. \MR{MR1419017 (97i:35068)}

\bibitem{Bi}
{S.} Bianchini, \emph{On the {E}uler-{L}agrange equation for a variational
  problem}, Discrete Contin. Dyn. Syst. \textbf{17} (2007), no.~3, 449--480
  (electronic). \MR{2276421 (2007k:49048)}

\bibitem{CaCa}
{P.} Cannarsa and {P.} Cardaliaguet, \emph{Representation of equilibrium
  solutions to the table problem for growing sandpiles}, J.\ Eur.\ Math.\ Soc.\
  (JEMS) \textbf{6} (2004), 435--464.

\bibitem{CCCG}
{P.} Cannarsa, {P.} Cardaliaguet, {G.} Crasta, and {E.} Giorgieri, \emph{A
  Boundary Value Problem for a {PDE} Model in Mass Transfer Theory:
  Representation of Solutions and Applications}, Calc.\ Var.\ Partial
  Differential Equations \textbf{24} (2005), 431--457.

\bibitem{CCS}
{P.} Cannarsa, {P.} Cardaliaguet, and {C.} Sinestrari, \emph{On a differential
  model for growing sandpiles with non-regular sources}, Comm. Partial
  Differential Equations \textbf{34} (2009), no.~7-9, 656--675. \MR{MR2560296}

\bibitem{Ces}
{L.} Cesari, Optimization---theory and applications, Applications of
  Mathematics (New York), vol.~17, Springer-Verlag, New York, 1983.

\bibitem{CMf}
{G.} Crasta and {A.} Malusa, \emph{The distance function from the boundary in a
  {M}inkowski space}, Trans.\ Amer.\ Math.\ Soc. \textbf{359} (2007),
  5725--5759.

\bibitem{CMg}
{G.} Crasta and {A.} Malusa, \emph{On a system of partial differential
  equations of {M}onge-{K}antorovich type}, J.\ Differential Equations
  \textbf{235} (2007), 484--509.

\bibitem{CMi}
{G.} Crasta and {A.} Malusa, \emph{A sharp uniqueness result for a class of
  variational problems solved by a distance function}, J.\ Differential
  Equations \textbf{243} (2007), 427--447.

\bibitem{CMh}
{G.} Crasta and {A.} Malusa, \emph{A variational approach to the macroscopic
  electrodynamics of anisotropic hard superconductors}, Arch.\ Rational Mech.\
  Anal. \textbf{192} (2009), 87--115.

\bibitem{CMl}
{G.} Crasta and {A.} Malusa, \emph{A nonhomogeneous boundary value problem in
  mass transfer theory}, Calc.\ Var.\ Partial Differential Equations
  \textbf{44} (2012), 61--80.

\bibitem{CFV}
{G.} Crasta and {S.}~Finzi Vita, \emph{An existence result for the sandpile
  problem on flat tables with walls}, Netw. Heterog. Media \textbf{3} (2008),
  no.~4, 815--830. \MR{2448942 (2010b:35219)}

\bibitem{Gromov}
{M.} Gromov, Metric structures for {R}iemannian and non-{R}iemannian spaces,
  english ed., Modern Birkh\"auser Classics, Birkh\"auser Boston Inc., Boston,
  MA, 2007, Based on the 1981 French original, With appendices by M. Katz, P.
  Pansu and S. Semmes, Translated from the French by Sean Michael Bates.
  \MR{MR2307192 (2007k:53049)}

\bibitem{HK}
{K.P.} Hadeler and {C.} Kuttler, \emph{Dynamical models for granular matter},
  Granular\ Matter \textbf{2} (1999), 9--18.

\bibitem{Li}
{P.L.} Lions, Generalized solutions of {H}amilton-{J}acobi equations, Pitman,
  Boston, 1982.

\bibitem{Pr}
{L.} Prigozhin, \emph{Variational model of sandpile growth}, European J.\
  Appl.\ Math. \textbf{7} (1996), 225--235.

\bibitem{Rock}
{R.T.} Rockafellar, Convex Analysis, Princeton Univ.\ Press, Princeton, NJ,
  1970.

\bibitem{Sch}
{R.} Schneider, Convex bodies: the {B}runn--{M}inkowski theory, Cambridge
  Univ.~Press, Cambridge, 1993.

\end{thebibliography}

\def\cprime{$'$}
\providecommand{\bysame}{\leavevmode\hbox to3em{\hrulefill}\thinspace}
\providecommand{\MR}{\relax\ifhmode\unskip\space\fi MR }
\providecommand{\MRhref}[2]{%
  \href{http://www.ams.org/mathscinet-getitem?mr=#1}{#2}
}
\providecommand{\href}[2]{#2}

\end{document}